\newtheoremstyle{mythm}
   {10pt}                   
   {10pt}                   
   {}          		    
   {}                      
   {\normalfont\bfseries}  
   {}                      
   { }
   {\textbf{\thmname{#1} \thmnumber{#2} \thmnote{(#3)}}} 
\newtheoremstyle{mythm2}
   {10pt}                   
   {10pt}                   
   {\itshape}          		    
   {}                      
   {\normalfont\bfseries}  
   {}                      
   { }
   {\textbf{\thmname{#1} \thmnumber{#2} \thmnote{(#3)}}} 
\newtheoremstyle{myex}
  {10pt}                   
   {10pt}                   
   {}          		    
   {}                      
   {\normalfont\bfseries}  
   {}                      
   { }
   {\textbf{\thmname{#1} \thmnumber{#2} \thmnote{(#3)}}} 
\theoremstyle{myex}
\newtheorem*{XxmpX}{Example} 
\newenvironment{ex}    
  {%
   \pushQED{\qed}\begin{XxmpX}}
  {\popQED\end{XxmpX}}
\theoremstyle{mythm2}
\newtheorem{thm}{Theorem}[section]
\newtheorem{lem}[thm]{Lemma}
\theoremstyle{mythm}
\newtheorem{Rem}[thm]{Remark} 
\newenvironment{rem}    
  {%
   \pushQED{\qed}\begin{Rem}}
  {\popQED\end{Rem}}
\DeclareMathOperator{\argmax}{argmax}
\DeclareMathOperator{\diag}{diag}
\newcommand{\eqcolon}{\mathrel{\resizebox{\widthof{$\mathord{=}$}}{\height}{ $\!\!=\!\!\resizebox{1.2\width}{0.8\height}{\raisebox{0.23ex}{$\mathop{:}$}}\!\!$ }}}
\newcommand{\mods}{\,\bmod\,}
\newcommand{\RR}{\mathbb{R}}
\newcommand{\CC}{\mathbb{C}}
\newcommand{\NN}{\mathbb{N}}
\newcommand{\x}{\mathbf{x}}
\newcommand{\y}{\mathbf{y}}
\newcommand{\cxx}{\mathbf{x}^{\widehat{\mathrm{II}}}}
\newcommand{\cx}{x^{\widehat{\mathrm{II}}}}
\newcommand{\cfxx}{\mathbf{x}^{\widehat{\mathrm{IV}}}}
\newcommand{\dct}{^{\widehat{\mathrm{II}}}}
\newcommand{\dctf}{^{\widehat{\mathrm{IV}}}}
\newcommand{\jj}{^{(j)}}
\newcommand{\jjj}{^{(j+1)}}
\newcommand{\uu}{\mathbf{u}}
\newcommand{\z}{\mathbf{z}}
\newcommand{\V}{\mathbf{V}}
\newcommand{\PP}{\mathbf{P}}
\newcommand{\I}{\mathbf{I}}
\newcommand{\T}{\mathbf{T}}
\newcommand{\bfeta}{\boldsymbol{\eta}}
\newcommand{\eps}{\varepsilon}
\newcommand{\J}{\mathbf{J}}
\newcommand{\D}{\mathbf{D}}
\newcommand{\Ct}{\mathbf{C}^{\mathrm{II}}}
\newcommand{\Cf}{\mathbf{C}^{\mathrm{IV}}}
\newcommand{\Sf}{\mathbf{S}^{\mathrm{IV}}}
\newcommand{\cc}{\mathbf{c}}
\newcommand{\s}{\mathbf{s}}
\newcommand{\bb}{\mathbf{b}}
\newcommand{\A}{\mathbf{A}}
\begin{document}	
\title{Real Sparse Fast DCT for Vectors with Short Support}
\author{Sina Bittens\thanks{University of Göttingen, Institute for Numerical and Applied Mathematics, Lotzestr. 16-18, 37083 Göttingen, 
Germany. Email: sina.bittens@mathematik.uni-goettingen.de} \and Gerlind Plonka\thanks{University of Göttingen, Institute for Numerical and Applied Mathematics, Lotzestr. 16-18, 37083 Göttingen, 
Germany. Email: plonka@math.uni-goettingen.de} }
\date{\today}
\maketitle

\begin{center}
 \textsc{Dedicated to Manfred Tasche on the occasion of his 75th birthday}
\end{center}
\vspace{1cm}

\begin{abstract}
 In this paper we present a new fast and deterministic algorithm for the inverse discrete cosine transform of type II for reconstructing the input vector $\x\in\RR^N$, $N=2^J$, with short support of length $m$ from its discrete cosine transform $\cxx=\Ct_N\x$ if an upper bound $M\geq m$ is known. The resulting algorithm only uses real arithmetic,  has a runtime of $\mathcal{O}\left(M\log M+m\log_2\frac{N}{M}\right)$ and requires $\mathcal{O}\left(M+m\log_2\frac{N}{M}\right)$ samples of $\cxx$. For $m,M\rightarrow N$ the runtime and sampling requirements approach those of a regular IDCT-II for vectors with full support. The algorithm presented hereafter does not employ inverse FFT algorithms to recover $\x$.
\end{abstract}
\textbf{Keywords.}\quad discrete cosine transform, deterministic sparse fast DCT, sublinear sparse DCT
\newline
\textbf{AMS Subject Classification.}\quad 65T50, 42A38, 65Y20.
\section{Introduction}
Due to recent efforts deterministic sparse FFT algorithms utilizing a priori knowledge of the resulting vector are now well established,  and there exist several methods which achieve runtimes that scale sublinearly in the vector length $N$ if $\x\in\CC^N$ is known to possess at most $m$ significantly large entries. If, for example, the support of a vector $\x\in\RR^{2^J}_{\geq0}$ has a short support of length $m$, there exists a deterministic, adaptive DFT algorithm with runtime $\mathcal{O}\left(m\log m\log\frac{N}{m}\right)$, see \cite{plonka_nonneg}. Other deterministic, sublinear-time methods with different requirements on the sought-after vector $\x$ and its support structure include \cite{bit_zha_iw,akavia2014,bittens2017,christlieb2016multiscale,iwen,iwen_improved,plonka_smallsupp,plonka_nonneg,plonka_sparse,iwen2013improved,bit_plon}. 

The investigation of sparse and fast deterministic algorithms for the related trigonometric transforms in their respective cosine and sine bases has not yet been that thorough. However, besides the DFT, the discrete cosine transform (DCT) is one of the most important algorithms in engineering and data processing. Among numerous other applications the sparse DCT can be employed to evaluate polynomials in monomial form from sparse expansions of Chebyshev polynomials, see, e.g., \cite{PPST}, Chapter 6. As far as we are aware, there exist no fast sparse methods that have been specifically optimized for the cosine or sine bases. Of course it is always possible to apply sparse FFT algorithms to obtain for example $\x\in\RR^N$ from $\cxx$, using that 
\begin{equation}\label{eq:dft_dct}
 x_k\dct=\frac{\eps_N(k)}{\sqrt{2N}}\omega_{4N}^k\cdot \widehat{y}_k, \qquad \forall k\in\{0,\dotsc,N-1\},
\end{equation}
where $\eps_N(k)=\frac{1}{\sqrt{2}}$ for $k\equiv 0\mods N$ and $\eps_N(k)=1$ for $k\not\equiv 0\mods N$, $\omega_{4N}=e^{\frac{-2\pi i}{N}}$ and $\y=\left(x_0,x_1,\dotsc,x_{N-1},x_{N-1},x_{N-2},\dotsc,x_0\right)^T\in\RR^{2N}$, see, e.g., \cite{bit_plon} and \cite{PPST}, Chapter 6.4.1. However, if $\x$ is $m$-sparse, then $\y$ is $2m$-sparse, so applying a general sparse FFT algorithm is not the most efficient solution. Furthermore, $\y$ is symmetric and its support structure is closely related to the support structure of $\x$, which can be used to improve the runtime. In \cite{bit_plon}, where the recovery of a vector $\x\in\RR^N$ with short support of length $m$ from $\cxx$ based on (\ref{eq:dft_dct}) is studied, the short support of $\x$ and the resulting symmetric reflected block support of $\y$ are exploited. The algorithm in \cite{bit_plon} achieves a sublinear runtime of $\mathcal{O}\left(m\log m\log\frac{2N}{m}\right)$ and requires $\mathcal{O}\left(m\log\frac{2N}{m}\right)$ samples of the input vector $\cxx\in\RR^N$, $N=2^J$. Thus it performs better than general sparse FFT methods, as it is specifically tailored to the occurring support structure. Nevertheless, despite being an adaptive algorithm which does not need any a priori knowledge of the support length, its assumptions on the sought-after vector $\x$ are quite strict and, without supposing extensive knowledge of $\x$, they can usually only be satisfied if, e.g., $\x\in\RR^N_{\geq0}$. Furthermore, the algorithm relies on complex arithmetic, as the problem of reconstructing $\x$ from $\cxx$ is transferred to the problem of reconstructing the vector $\y\in\RR^{2N}$ of double length from its Fourier transform $\widehat{\y}\in\CC^{2N}$, which can be computed efficiently from $\cxx$. 

However, since there also exist fast DCT algorithms for arbitrary vectors that are completely based on real arithmetic, investigating fully real sparse fast DCT algorithms is the natural next course of action. In this paper we present the, to the best of our knowledge, first deterministic sparse fast algorithm for the inverse DCT-II (or, equivalently, for the DCT-III) that only employs real arithmetic. To be more precise we assume that the vector $\x\in\RR^N$, $N=2^J$, which we want to reconstruct, has a short support, or one-block support, of length $m<N$ and that an upper bound $M\geq m$ on the support length is known a priori. If the vector additionally satisfies the simple non-cancellation condition that the first and last entry in the support do not sum up to zero, the algorithm proposed herein recovers $\x$ exactly in $\mathcal{O}\left(M\log M+m\log_2\frac{N}{M}\right)$ time, for which $\mathcal{O}\left(M+m\log_2\frac{N}{M}\right)$ samples of $\cxx$ are required. Thus, if $m,M\rightarrow N$, the algorithm approaches the same runtime and sampling requirements as a regular IDCT-II for vectors of length $N$ with full support.

\subsection{Notation and Problem Statement}
Let $N=2^J$ with $J\in\NN$. For $a,b\in\NN_0$, $a\leq b$, we denote by $I_{a,b}$ the set
\[
 I_{a,b}\coloneqq\{a,a+1,\dotsc,b\}\subset\NN_0
\]
of integers. We say that a vector $\x=\x^{(J)}=\left(x_k\right)_{k=0}^{N-1}\in\RR^N$ has a \emph{short support}, or \emph{one-block support}, $S^{(J)}$ \emph{of length} $m^{(J)}=m$ if
\[
 x_k=0 \qquad \forall\, k\notin S^{(J)}\coloneqq I_{\mu^{(J)},\nu^{(J)}}=\left\{\mu^{(J)},\mu^{(J)}+1,\dotsc,\nu^{(J)}\right\},
\]
for some $\mu^{(J)}\in\{0,\dotsc,N-m\}$ and $\nu^{(J)}\coloneqq\mu^{(J)}+m-1$ with $x_{\mu^{(J)}}\neq 0$ and $x_{\nu^{(J)}}\neq0$. Note that, unlike in \cite{bit_plon}, we do not allow a periodic support in this paper. The interval $S^{(J)}\coloneqq I_{\mu^{(J)},\nu^{(J)}}$ is called the \emph{support interval}, $\mu^{(J)}$ the \emph{first support index} and $\nu^{(J)}$ the \emph{last support index} of $\x$. The support length and the first and last support index are uniquely determined. 

For $n\in\NN$ the \emph{cosine matrix of type II} is defined as
\begin{equation*}
  \Ct_n\coloneqq\sqrt{\frac{2}{n}}\left(\eps_n(k)\cos\left(\frac{k(2l+1)\pi}{2n}\right)\right)_{k,\,l=0}^{n-1},
\end{equation*}
 where $\eps_n(k)\coloneqq \frac{1}{\sqrt{2}}$ for $k\equiv 0\mods n $ and $\eps_n(k)\coloneqq 1$ for $k\not\equiv 0\mods n $.
This matrix is orthogonal, i.e., $ \Ct_n \left(\Ct_n\right)^{T} = {\mathbf I}_{n}$, where ${\mathbf I}_{n}$ denotes  the identity matrix of size $n \times n$. 
The \emph{discrete cosine transform of type II (DCT-II)} of $\x\in\RR^n$ is given by
 \[
  \cxx\coloneqq\Ct_n\x.
 \]
The inverse DCT-II coincides with the \emph{discrete cosine transform of type III (DCT-III)} with transformation matrix ${\mathbf C}_n^{\mathrm{III}} \coloneqq \left(\Ct_n\right)^{T}$. The \emph{cosine matrix of type IV} is defined as
 \[
  \Cf_n\coloneqq\sqrt{\frac{2}{n}}\left(\cos\left(\frac{(2k+1)(2l+1)\pi}{4n}\right)\right)_{k,\,l=0}^{n-1}.
 \]
 This matrix is orthogonal as well, with $\Cf_n=\left(\Cf_n\right)^T$, and the \emph{discrete cosine transform of type IV (DCT-IV)} of $\x\in\RR^n$ is given by
 \[
  \cfxx\coloneqq\Cf_n\x.
 \]
 Furthermore, the closely related \emph{sine matrix of type IV} is defined as
 \[
  \Sf_n\coloneqq\sqrt{\frac{2}{n}}\left(\sin\left(\frac{(2k+1)(2l+1)\pi}{4n}\right)\right)_{k,\,l=0}^{n-1}.
 \]
The purpose of this paper is to develop a deterministic sparse fast DCT algorithm for recovering $\x\in\RR^N$ with (unknown) short support of length $m<N$ from its DCT-II, $\cxx$, in sublinear time $\mathcal{O}\left(M\log M+m\log_2\frac{N}{M}\right)$ if an upper bound $M\geq m$ on the support length of $\x$ is known. If $m$ or $M$ approach the vector length $N$, the algorithm introduced herein still has a runtime complexity of $\mathcal{O}(N\log N)$ which is also achieved by fast DCT algorithms for vectors with full support, see, e.g., \cite{plonka_dct,wang}. For exact data our algorithm returns the correct vector $\x$ if, in addition to $x_{\mu^{(J)}}\neq 0$ and $x_{\nu^{(J)}}\neq0$, $\x$ satisfies the non-cancellation condition 
\begin{equation}\label{eq:suppose}
 x_{\mu^{(J)}}+x_{\nu^{(J)}}\neq0 \qquad \text{if }m\text{ is even.}
\end{equation}
This condition holds for example if all nonzero entries of $\x$ are positive or if all nonzero entries of $\x$ are negative, i.e., $\x\in\RR^N_{\geq0}$ or $\x\in\RR^N_{\leq0}$. In practice, i.e., for noisy data, one has to guarantee that for a threshold $\eps>0$ depending on the noise level we have
\[
 \left|x_{\mu^{(J)}}\right|>\eps, \; \left|x_{\nu^{(J)}}\right|>\eps \qquad\text{and}\qquad \left|x_{\mu^{(J)}}+x_{\nu^{(J)}}\right|>\eps.
\]
\subsection{Outline of the Paper}
The algorithm presented in this paper generalizes ideas introduced in \cite{plonka_smallsupp,plonka_nonneg,plonka_sparse,bit_plon} for reconstructing a vector $\x\in\RR^N$, $N=2^J$, with short support of length $M$, $M$-sparse support or reflected two-block support with block length $M$ from its DFT. In these papers the sought-after vector $\x$ is recovered iteratively from its $2^j$-length periodizations $\x\jj$, where $\x^{(J)}\coloneqq\x$ and $\x\jj$ is obtained by adding the first and second half of $\x\jjj$. However, for the DCT, the concept of periodizations has to be adapted using an iterative application of both reflections and the periodizations from \cite{plonka_smallsupp}, as can be seen in Section \ref{sec:supp}. We still set $\x^{(J)}\coloneqq\x$, but $\x\jj$ is now defined by adding the first half of $\x\jjj$ and the reflection of the second half of $\x\jjj$, i.e.,
\[
 \x\jj\coloneqq\left(x\jjj_0+x\jjj_{2^{j+1}-1},x\jjj_1+x\jjj_{2^{j+1}-2},\dotsc,x\jjj_{2^j-1}+x\jjj_{2^j}\right)^T. 
\]
Employing this concept for $j\in\{L,\dotsc,J-1\}$, where $2^L\geq 2M$, our new algorithm is based on efficiently and iteratively recovering $\x\jjj$ from $\cxx$ using that $\x\jj$ is known. Note that, unlike the DCT reconstruction algorithm for vectors with one-block support in \cite{bit_plon}, which uses a closely related DFT reconstruction and hence complex arithmetic, our algorithm only employs real arithmetic, as it utilizes real factorizations of cosine matrices. This approach requires some observations about the support of $\x\jjj$ if the support of $\x\jj$ is given, which are summarized in Section \ref{sec:supp}. For the reconstruction of $\x\jjj$ from $\x\jj$ we have to distinguish whether the support of $\x\jj$ is contained in its last $M$ entries or whether it is not contained in those entries. In Section \ref{sec:procedures} we present a numerical procedure for each of the two cases. With the help of these methods we develop the sparse fast DCT algorithm for bounded support lengths in Section \ref{sec:alg_bound} and briefly mention a simplified algorithm for exactly known short support lengths in Section \ref{sec:alg_exact}. We conclude our paper by presenting numerical results detailing the performance of our algorithms with respect to runtime and stability for noisy input data in Section \ref{sec:numerics}.

\section{Support Properties of the Reflected Periodizations}\label{sec:supp}
In this paper we want to find a deterministic algorithm for reconstructing $\x\in\RR^N$ with short support of length $m$ from its discrete cosine transform of type II, $\cxx$, if an upper bound $M\geq m$ is known and using, unlike in \cite{bit_plon}, only real arithmetic. In order to do so we adapt techniques used in \cite{plonka_smallsupp,plonka_nonneg,plonka_sparse,bit_plon} for the FFT reconstruction of vectors with short support to the real DCT setting.

There exist several different factorizations of the orthogonal matrix $\Ct_n$, but the following one, see Lemma 2.2 in \cite{plonka_dct}, has proven to be particularly useful in our case. It employs the discrete cosine transform of type IV.
\begin{lem}\label{lem:factorization}
 Let $n\in\NN$ be even and let
 \[
  \PP_n \coloneqq\begin{pmatrix}
                  \left(\delta_{2k,\,l}\right)_{k,\,l=0}^{\frac{n}{2}-1,\,n-1} \\
                  \left(\delta_{2k+1,\,l}\right)_{k,\,l=0}^{\frac{n}{2}-1,\,n-1}
                 \end{pmatrix}
             =\begin{pmatrix}
               1 & 0 & 0 &  0     &   0   & \dots & 0 & 0 \\
               0 & 0 & 1 & 0      &   0   & \dots & 0 & 0 \\
          \vdots &   &   & \vdots &       & \vdots&   & \vdots \\
	       0 &   &   & \dots &       & \dots & 1 & 0 \\
	       0 & 1 & 0 & 0      & 0    & \dots & 0 & 0 \\
	       0 & 0 & 0 & 1      & 0     & \dots & 0 & 0 \\
	  \vdots &   &   & \vdots &       & \vdots&   & \vdots \\
	       0 &\dots&   &\dots&       & \dots & 0 & 1
              \end{pmatrix}\in\RR^{n\times n}
 \]
 be the \emph{even-odd permutation matrix}. Further, define 
 \[
  \T_n\coloneqq\frac{1}{\sqrt{2}}\left(\begin{array}{c|c}
                                       \I_{\frac{n}{2}} & \J_{\frac{n}{2}} \\ \hline
                                       \I_{\frac{n}{2}} & -\J_{\frac{n}{2}}
                                      \end{array}\right)\in\RR^{n\times n},
 \]
 where $\I_{\frac{n}{2}}$ denotes the identity matrix of size $\frac{n}{2}\times\frac{n}{2}$ and
 \[
  \J_{\frac{n}{2}}\coloneqq\left(\delta_{k,\,\frac{n}{2}-1-l}\right)_{k,\,l=0}^{\frac{n}{2}-1} =\begin{pmatrix}
                                                               0 & \dots & 0 & 1 \\
                                                               0 & & 1 & 0 \\
                                                               \vdots & \iddots & & \vdots \\
                                                               1 & \dots & 0 & 0 
                                                              \end{pmatrix}\in\RR^{\frac{n}{2}\times\frac{n}{2}}
 \]
 denotes the \emph{counter identity}. Then $\Ct_n$ satisfies the following factorization,
 \[
  \Ct_n=\PP_n^T \left(\begin{array}{c|c}
               \Ct_{\frac{n}{2}} & \boldsymbol{0}_{\frac{n}{2}}\\ \hline
               \boldsymbol{0}_{\frac{n}{2}} & \Cf_{\frac{n}{2}}
              \end{array}\right)\T_n.
 \]
\end{lem}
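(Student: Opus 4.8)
The plan is to compare the two sides of the claimed identity entry by entry. Since $\PP_n$ is a permutation matrix we have $\PP_n\PP_n^T=\I_n$, so the asserted factorization is equivalent to
\[
 \PP_n\Ct_n=\left(\begin{array}{c|c}\Ct_{\frac{n}{2}}&\boldsymbol{0}_{\frac{n}{2}}\\\hline\boldsymbol{0}_{\frac{n}{2}}&\Cf_{\frac{n}{2}}\end{array}\right)\T_n=\frac{1}{\sqrt{2}}\left(\begin{array}{c|c}\Ct_{\frac{n}{2}}&\Ct_{\frac{n}{2}}\J_{\frac{n}{2}}\\\hline\Cf_{\frac{n}{2}}&-\Cf_{\frac{n}{2}}\J_{\frac{n}{2}}\end{array}\right).
\]
On the left, $\PP_n\Ct_n$ is just $\Ct_n$ with its rows reordered so that first the even-indexed rows (row $2k$, $k\in I_{0,\frac{n}{2}-1}$) and then the odd-indexed rows (row $2k+1$, $k\in I_{0,\frac{n}{2}-1}$) appear. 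Hence it suffices to verify four block identities, obtained by additionally splitting the column index of $\Ct_n$ into $l$ and $\frac{n}{2}+l$ with $l\in I_{0,\frac{n}{2}-1}$; here one uses that right-multiplication by the counter identity merely reverses the columns, i.e.\ $(\mathbf{C}\J_{\frac{n}{2}})_{k,l}=\mathbf{C}_{k,\,\frac{n}{2}-1-l}$ for any matrix $\mathbf{C}$ with $\frac{n}{2}$ columns.

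First I would settle the normalization and the $\eps$-factors once and for all: $\sqrt{2/(n/2)}=2/\sqrt{n}$, so the factor $\tfrac{1}{\sqrt{2}}$ coming from $\T_n$ turns the prefactor of $\Ct_{\frac{n}{2}}$ or $\Cf_{\frac{n}{2}}$ into exactly the prefactor $\sqrt{2/n}$ of $\Ct_n$; moreover $\eps_n(2k)=\eps_{\frac{n}{2}}(k)$ for $k\in I_{0,\frac{n}{2}-1}$ (both equal $\tfrac{1}{\sqrt{2}}$ precisely when $k=0$), and $\eps_n(2k+1)=1$ because $n$ is even. For the top-left block, row $2k$, column $l$ of $\Ct_n$ carries the argument $\tfrac{2k(2l+1)\pi}{2n}=\tfrac{k(2l+1)\pi}{2(n/2)}$, which is exactly the argument occurring in $(\Ct_{\frac{n}{2}})_{k,l}$. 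For the top-right block, column $\tfrac{n}{2}+l$ changes this argument to $k\pi+\tfrac{k(2l+1)\pi}{2(n/2)}$, and $\cos(k\pi+\theta)=(-1)^k\cos\theta=\cos(k\pi-\theta)$ identifies the entry with $(\Ct_{\frac{n}{2}})_{k,\,\frac{n}{2}-1-l}=(\Ct_{\frac{n}{2}}\J_{\frac{n}{2}})_{k,l}$. For the bottom-left block, row $2k+1$, column $l$ of $\Ct_n$ has argument $\tfrac{(2k+1)(2l+1)\pi}{2n}=\tfrac{(2k+1)(2l+1)\pi}{4(n/2)}$, matching $(\Cf_{\frac{n}{2}})_{k,l}$. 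Finally, for the bottom-right block, column $\tfrac{n}{2}+l$ produces the argument $\tfrac{(2k+1)\pi}{2}+\tfrac{(2k+1)(2l+1)\pi}{4(n/2)}$, and from $\cos\bigl(\tfrac{(2k+1)\pi}{2}\pm\theta\bigr)=\mp(-1)^k\sin\theta$ one checks that both the corresponding entry of $\Ct_n$ and $-(\Cf_{\frac{n}{2}}\J_{\frac{n}{2}})_{k,l}=-(\Cf_{\frac{n}{2}})_{k,\,\frac{n}{2}-1-l}$ equal $-(-1)^k\sqrt{2/n}\,\sin\bigl(\tfrac{(2k+1)(2l+1)\pi}{2n}\bigr)$. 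Assembling the four cases yields the stated factorization.

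I expect no conceptual obstacle: the entire argument rests on the elementary reduction formulas $\cos(k\pi\pm\theta)=(-1)^k\cos\theta$ and $\cos\bigl(\tfrac{(2k+1)\pi}{2}\pm\theta\bigr)=\mp(-1)^k\sin\theta$. The only genuinely delicate, error-prone part is the bookkeeping — keeping the column reversal induced by $\J_{\frac{n}{2}}$ straight alongside the shift $l\mapsto\tfrac{n}{2}+l$ inside the cosine arguments, tracking the signs that drop out of the reduction formulas, and making sure the $\eps_n$-prefactors of $\Ct_n$ collapse correctly (to $\eps_{\frac{n}{2}}$ in the even rows, to $1$ in the odd rows). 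Reconciling the three normalization constants $\sqrt{2/n}$, $\sqrt{2/(n/2)}$ and $\tfrac{1}{\sqrt{2}}$ at the very start, as above, keeps this manageable.
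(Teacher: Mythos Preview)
Your entry-by-entry verification is correct: the normalization bookkeeping, the identification $\eps_n(2k)=\eps_{n/2}(k)$ and $\eps_n(2k+1)=1$, and the four block checks via the reduction formulas all go through exactly as you describe. The paper itself does not prove this lemma but simply cites it from \cite{plonka_dct}, so your direct computation supplies precisely the standard argument that the reference would contain.
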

From now on let $N\coloneqq2^J$ for $J\geq1$. For $\x\in\RR^{2^{j+1}}$, $j\in\{0,\dotsc,J-1\}$, we denote by
\[
 \x_{(0)}\coloneqq\left(x_k\right)_{k=0}^{2^j-1}\in\RR^{2^j} \quad \text{and}\quad \x_{(1)}\coloneqq\left(x_k\right)_{k=2^j}^{2^{j+1}-1}\in\RR^{2^j}
\]
the first and second half of $\x$, respectively, i.e., $\x^T=\left(\x_{(0)}^T,\x_{(1)}^T\right)$.
\begin{rem}\label{rem:prop}
 Note that for $\x\in\RR^n$, $n$ even, we have
 \begin{equation}\label{eq:permutation}
  \PP_n\x=\begin{pmatrix}
           \left(x_{2k}\right)_{k=0}^{\frac{n}{2}-1} \\
           \left(x_{2k+1}\right)_{k=0}^{\frac{n}{2}-1}
          \end{pmatrix}
 \end{equation}
 and
 \begin{equation}\label{eq:tn}
  \T_n\x=\frac{1}{\sqrt{2}}\begin{pmatrix}
                            \I_{\frac{n}{2}} & \J_{\frac{n}{2}} \\ 
                            \I_{\frac{n}{2}} & -\J_{\frac{n}{2}}
                            \end{pmatrix}   \begin{pmatrix}
                                             \x_{(0)} \\
                                             \x_{(1)}
                                            \end{pmatrix}
                             =\frac{1}{\sqrt{2}}\begin{pmatrix}
                                                 \x_{(0)}+\J_{\frac{n}{2}}\x_{(1)} \\
                                                 \x_{(0)}-\J_{\frac{n}{2}}\x_{(1)}
                                                \end{pmatrix}.
 \end{equation}
\end{rem}
We assume that $\x$ satisfies (\ref{eq:suppose}) in order to guarantee that there is no cancellation of the first and last support entry in the iterative algorithm. Inspired by (\ref{eq:tn}) we define a DCT-II-specific analog to the notion of periodized vectors introduced in \cite{plonka_smallsupp,plonka_nonneg} for DFT algorithms for vectors with short support. Let $\x\in\RR^N$ with $N=2^J$ and set $\x^{(J)}\coloneqq\x$. For $j\in\{0,\dotsc,J-1\}$ define the \emph{reflected periodization} $\x\jj\in \RR^{2^j}$ of $\x$ as
 \begin{equation}\label{eq:periodization}
  \x\jj\coloneqq\x\jjj_{(0)}+\J_{2^{j}}\x\jjj_{(1)}.
 \end{equation}
 We show that the DCT-II of the reflected periodization $\x\jj$ is already completely determined by the DCT-II of $\x$.
\begin{lem}\label{lem:periodization}
 Let $N=2^J$, $J\in\NN$, $\x\in\RR^N$ and $j\in\{0,\dotsc,J\}$. Then
 \[
  \left(\x\jj\right)\dct={\sqrt{2}}^{J-j}\left(\cx_{2^{J-j}k}\right)_{k=0}^{2^j-1}.
 \]
\end{lem}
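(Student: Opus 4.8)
The plan is to prove the identity by downward induction on $j$, from $j=J$ down to $j=0$, applying the factorization of Lemma~\ref{lem:factorization} once per step. The base case $j=J$ is immediate: by definition $\x^{(J)}=\x$, while ${\sqrt{2}}^{\,J-J}=1$ and $2^{J-J}k=k$, so both sides reduce to $\cxx=\Ct_N\x$.

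For the induction step, suppose the formula holds for some $j+1$ with $j\in\{0,\dotsc,J-1\}$. Since $n\coloneqq 2^{j+1}$ is even, Lemma~\ref{lem:factorization} gives
\[
 (\x\jjj)\dct=\Ct_{2^{j+1}}\x\jjj=\PP_{2^{j+1}}^T\left(\begin{array}{c|c}\Ct_{2^j} & \boldsymbol{0}_{2^j}\\ \hline \boldsymbol{0}_{2^j} & \Cf_{2^j}\end{array}\right)\T_{2^{j+1}}\x\jjj.
\]
By (\ref{eq:tn}), the first half of $\T_{2^{j+1}}\x\jjj$ equals $\tfrac{1}{\sqrt{2}}\bigl(\x\jjj_{(0)}+\J_{2^j}\x\jjj_{(1)}\bigr)$, which by the definition (\ref{eq:periodization}) of the reflected periodization is exactly $\tfrac{1}{\sqrt{2}}\x\jj$. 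Hence the block-diagonal matrix sends this first half to $\tfrac{1}{\sqrt{2}}\Ct_{2^j}\x\jj=\tfrac{1}{\sqrt{2}}(\x\jj)\dct$, and by (\ref{eq:permutation}) the transposed permutation $\PP_{2^{j+1}}^T$ places this block into the even-indexed coordinates, so that $\bigl((\x\jjj)\dct\bigr)_{2k}=\tfrac{1}{\sqrt{2}}\bigl((\x\jj)\dct\bigr)_k$ for $k=0,\dotsc,2^j-1$. Equivalently,
\[
 \bigl((\x\jj)\dct\bigr)_k=\sqrt{2}\,\bigl((\x\jjj)\dct\bigr)_{2k},\qquad k=0,\dotsc,2^j-1.
\]
Inserting the induction hypothesis with index $2k$ yields $\bigl((\x\jjj)\dct\bigr)_{2k}={\sqrt{2}}^{\,J-j-1}\cx_{2^{J-j-1}\cdot 2k}={\sqrt{2}}^{\,J-j-1}\cx_{2^{J-j}k}$, and multiplying by $\sqrt{2}$ gives the claimed expression for $j$, completing the induction.

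The one point requiring a little care is the bookkeeping with $\PP_{2^{j+1}}^T$: from (\ref{eq:permutation}) one has $(\PP_n\z)_k=z_{2k}$ for $k=0,\dotsc,\tfrac{n}{2}-1$ and every $\z\in\RR^n$, hence $\PP_n^T$ (being the inverse of the permutation $\PP_n$) maps a block vector with top half $\bb$ to the vector having $\bb$ in its even-indexed coordinates; this is precisely what is used above to extract $\bigl((\x\jjj)\dct\bigr)_{2k}$. Everything else is a direct chain of the identities already collected in Lemma~\ref{lem:factorization} and Remark~\ref{rem:prop}, so no genuine difficulty arises.
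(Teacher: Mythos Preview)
Your proof is correct and follows essentially the same approach as the paper: downward induction on $j$ with base case $j=J$, applying the factorization of Lemma~\ref{lem:factorization} once per step to relate the even-indexed entries of $(\x\jjj)\dct$ to $(\x\jj)\dct$. The only cosmetic difference is that the paper multiplies both sides by $\PP_{2^j}$ and reads off the top block, whereas you keep $\PP_{2^{j+1}}^T$ on the right-hand side and reason about where it places the top block; these are equivalent since $\PP_n$ is a permutation matrix.
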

\begin{proof}
 We prove the lemma by induction. For $j=J$ the claim holds since $\x^{(J)}=\x$. Now we assume the induction hypothesis for some $j\in\{1,\dotsc,J\}$ and show that the claim also holds for $j-1$. It follows from Lemma \ref{lem:factorization}, (\ref{eq:tn}) and the definition of the reflected periodization, (\ref{eq:periodization}), that
 \begin{align}
  \PP_{2^j}\Ct_{2^j}\x\jj&=\frac{1}{\sqrt{2}}\begin{pmatrix}
					      \Ct_{2^{j-1}} & \\
					      & \Cf_{2^{j-1}}
					     \end{pmatrix}\begin{pmatrix}
                                       \I_{2^{j-1}} & \J_{2^{j-1}} \\ 
                                       \I_{2^{j-1}} & -\J_{2^{j-1}}
                                      \end{pmatrix}   \begin{pmatrix}
						       \x\jj_{(0)} \\
						       \x\jj_{(1)}
						      \end{pmatrix} \notag \\
  &=\frac{1}{\sqrt{2}}\begin{pmatrix}
                      \Ct_{2^{j-1}}\x^{(j-1)} \\
                      \Cf_{2^{j-1}}\left(\x\jj_{(0)}-\J_{2^{j-1}}\x\jj_{(1)}\right)
                     \end{pmatrix}; \label{eq:odd}
 \end{align}
 thus motivating the definition of the reflected periodization. Together with the induction hypothesis and (\ref{eq:permutation}) the first $2^{j-1}$ rows of (\ref{eq:odd}) yield that 
 \begin{align*}
  \left(\x^{(j-1)}\right)\dct
  &=\Ct_{2^{j-1}}\x^{(j-1)}
   =\sqrt{2}\left(\PP_{2^j}\Ct_{2^j}\x\jj\right)_{(0)} 
  =\sqrt{2}\left(\left(x\jj\right)\dct_{2k}\right)_{k=0}^{2^{j-1}-1} \\
  &=\sqrt{2}\left(\sqrt{2}^{J-j}\cx_{2^{J-j} 2k}\right)_{k=0}^{2^{j-1}-1} 
  =\sqrt{2}^{J-(j-1)}\left(\cx_{2^{J-(j-1)}k}\right)_{k=0}^{2^{j-1}-1},
 \end{align*}
 which completes the proof.
\end{proof}
Since we always consider vectors $\x\in\RR^{2^J}$ with short support of length $m$ and their reflected periodizations in this paper, we have to introduce some notation for the support of the reflectedly periodized vectors.

 For $j\in\{0,\dotsc,J-1\}$ we say that $\x\jj$ has a \emph{short support of length} $m\jj$ with \emph{first support index} $\mu\jj\in\left\{0,\dotsc,2^j-m\jj\right\}$, \emph{last support index} $\nu\jj\coloneqq\mu\jj+m\jj-1$ and \emph{support interval} $S\jj$ if $x\jj_{\mu\jj},x\jj_{\nu\jj}\neq 0$ and
 \[
  x\jj_k=0 \quad \forall k\notin S\jj\coloneqq I_{\mu\jj,\nu\jj}\coloneqq\left\{\mu\jj,\mu\jj+1,\dotsc,\nu\jj\right\}.
 \] 
 Note that while $S^{(J)}$, i.e., the support interval of $\x=\x^{(J)}$, and $S\jj$ contain all indices at which $\x$ and $\x\jj$, respectively, have nonzero entries, this does not mean that all indices in $S^{(J)}$ and $S\jj$ correspond to nonzero entries, as we require the support sets to be intervals in $\NN_0$ for some of the proofs hereafter. Instead of $m^{(J)}$ we will usually just write $m$.

We can observe the following property of the reflected periodizations.
\begin{lem}\label{lem:support}
 Let $\x\in\RR^{N}$ with $N=2^J$, $J\in\NN$, have a short support of length $m$ and assume that $\x$ satisfies $(\ref{eq:suppose})$. Set $K\coloneqq\left\lceil\log_2m\right\rceil+1$. Then $\x\jj$ has a short support of length $m\jj\leq m$ for all $j\in\{K,\dotsc,J\}$.
\end{lem}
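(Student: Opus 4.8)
The plan is a downward induction on $j$, from $j=J$ down to $j=K$, with the inductive claim that $\x\jj$ has a short support of length $m\jj\le m$. The base case $j=J$ holds because $\x^{(J)}=\x$. For the step, suppose $\x\jjj$ has a short support $S\jjj=I_{\mu\jjj,\nu\jjj}$ with $m\jjj\le m$, and recall from (\ref{eq:periodization}) that $\x\jj=\x\jjj_{(0)}+\J_{2^j}\x\jjj_{(1)}$. The hypothesis $j\ge K$ is used only through the estimate $2^{j-1}\ge 2^{K-1}=2^{\lceil\log_2 m\rceil}\ge m\ge m\jjj$, which says that the support of $\x\jjj$ fits with room to spare inside one half of $\RR^{2^{j+1}}$. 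I would then split into three cases according to the position of $S\jjj$ relative to the index $2^j$.

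If $S\jjj\subseteq I_{0,2^j-1}$ then $\x\jjj_{(1)}=\boldsymbol0$, so $\x\jj=\x\jjj_{(0)}$ has the same nonzero entries as $\x\jjj$; if $S\jjj\subseteq I_{2^j,2^{j+1}-1}$ then $\x\jjj_{(0)}=\boldsymbol0$, so $\x\jj=\J_{2^j}\x\jjj_{(1)}$ is the reflection of that block and $S\jj=I_{2^{j+1}-1-\nu\jjj,\,2^{j+1}-1-\mu\jjj}$. In both cases $\x\jj\ne\boldsymbol0$ and $m\jj=m\jjj\le m$, so nothing more is needed. The substantial case is when $S\jjj$ straddles $2^j$, i.e.\ $\mu\jjj\le 2^j-1<2^j\le\nu\jjj$. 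Put $a\coloneqq 2^j-\mu\jjj\ge1$ and $b\coloneqq\nu\jjj-2^j+1\ge1$, so that $a+b=m\jjj$. Since $x\jj_k=x\jjj_k+x\jjj_{2^{j+1}-1-k}$ and the first summand is supported on $k\in I_{2^j-a,2^j-1}$ while the second is supported on $k\in I_{2^j-b,2^j-1}$, every nonzero entry of $\x\jj$ lies in $I_{2^j-\max(a,b),\,2^j-1}$, an interval of length $\max(a,b)\le m\jjj-1<m$. Thus it only remains to rule out $\x\jj=\boldsymbol0$.

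To this end I would evaluate $\x\jj$ at the leftmost candidate index $k_0\coloneqq 2^j-\max(a,b)$. If $a>b$, then $k_0=\mu\jjj$ and the second summand vanishes there, so $x\jj_{k_0}=x\jjj_{\mu\jjj}\ne0$; symmetrically, if $a<b$ then $x\jj_{k_0}=x\jjj_{\nu\jjj}\ne0$. In the remaining case $a=b$ one has $m\jjj=2a$ even, $\mu\jjj+\nu\jjj=2^{j+1}-1$, and both summands contribute at $k_0$, giving $x\jj_{k_0}=x\jjj_{\mu\jjj}+x\jjj_{\nu\jjj}$. So everything reduces to the following auxiliary statement, which I regard as the crux: \emph{if $j\ge K$ and the support of $\x\jjj$ is symmetric about the centre $2^{j}-\tfrac12$ of $I_{0,2^{j+1}-1}$, then $m$ is even and $x\jjj_{\mu\jjj}+x\jjj_{\nu\jjj}\ne0$.}

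For the auxiliary statement I would unravel the reflected periodizations and trace the support of $\x\jjj$ back up to $\x=\x^{(J)}$, using again that $2^{j+1}\ge 2^{K+1}\ge 4m$. If at some level $\ell\in\{j+1,\dots,J-1\}$ the support of $\x^{(\ell)}$ is contained in and centred within a dyadic block of size $2^{j+1}$ — as it is at level $j+1$ by hypothesis — then, since that support has length at most $m\le 2^{j-1}$, it lies in the central part of its block, well away from the block's ends; hence the preimage of this support under the folding that produces $\x^{(\ell)}$ from $\x^{(\ell+1)}$ splits into two pieces separated by a gap longer than $m$, so the support of $\x^{(\ell+1)}$ lies entirely in one half of $\RR^{2^{\ell+1}}$. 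Consequently that folding is a pure translation or reflection (never straddling), the support of $\x^{(\ell+1)}$ is again contained in and centred within a size-$2^{j+1}$ block, and the unordered pair $\{x^{(\ell)}_{\mu^{(\ell)}},x^{(\ell)}_{\nu^{(\ell)}}\}$ equals $\{x^{(\ell+1)}_{\mu^{(\ell+1)}},x^{(\ell+1)}_{\nu^{(\ell+1)}}\}$. Iterating up to $\ell=J$ shows that the support of $\x$ is centred within a size-$2^{j+1}$ block — so $m$ is even — and that $x\jjj_{\mu\jjj}+x\jjj_{\nu\jjj}=x_{\mu^{(J)}}+x_{\nu^{(J)}}$, which is nonzero by the non-cancellation condition (\ref{eq:suppose}). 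This completes the straddling case and hence the induction. The step I expect to be the main obstacle is precisely this symmetric straddling sub-case: one must show that the freshly formed first and last support entries of $\x\jj$ do not cancel, and this forces one to follow the support all the way back to $\x$, where (\ref{eq:suppose}) applies; the role of $K$ is exactly to keep the support short enough that no straddling can occur on the way up.
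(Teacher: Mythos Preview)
Your proof is correct and follows the same downward-induction strategy as the paper, with the same three cases according to whether $S\jjj$ lies in the first half, the second half, or straddles the midpoint. Your handling of the symmetric straddling sub-case $a=b$ --- tracing the support endpoints back up to level $J$ so that (\ref{eq:suppose}) can be invoked --- is in fact more careful than the paper's own argument, which in its case (iii) simply asserts $\mu\jj\in\{\mu\jjj,\,2^{j+1}-1-\nu\jjj\}$ without verifying that the entry at that index is nonzero when the two candidates coincide.
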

\begin{proof}
We employ an induction argument. By assumption $\x^{(J)}=\x$ has a short support of length $m$. Now suppose that for $j\in\{K,\dotsc,J-1\}$ $\x\jjj$ has a short support of length $m\jjj\leq m$ with support interval $S\jjj=I_{\mu\jjj,\nu\jjj}$, where $\mu\jjj\in\left\{0,\dotsc,2^{j+1}-m\jjj\right\}$ and $\nu\jjj\coloneqq\mu\jjj+m\jjj-1$. We have to distinguish three cases.
 \begin{enumerate}[wide, labelwidth=!, labelindent=0pt]
  \item $S\jjj\subset I_{0,2^j-1}$, i.e., the nonzero entries are contained in the first half of $\x\jjj$. \label{item:supp_i}
  
  Since $\x\jj=\x\jjj_{(0)}+\J_{2^j}\x\jjj_{(1)}$ by (\ref{eq:periodization}), we obtain that $\x\jj$ has a short support with
  \[
   \x\jj=\x\jjj_{(0)} \quad\text{and}\quad S\jj=S\jjj.
  \]
  \item $S\jjj\subset I_{2^j,2^{j+1}-1}$, i.e., the nonzero entries are contained in the second half of $\x\jjj$. \label{item:supp_ii}
  
  The definition of the reflected periodization implies that $\x\jj$ has a short support, as
  \[
   \x\jj=\J_{2^j}\x\jjj_{(1)} \quad\text{and}\quad S\jj=I_{2^{j+1}-1-\nu\jjj,2^{j+1}-1-\mu\jjj}.
  \]
  
  \item $\left\{2^j-1,2^j\right\}\subset S\jjj$ \label{item:supp_iii}
  
  Then at least one possibly nonzero entry from the second half of $\x\jjj$ is added to a possibly nonzero entry from the first half at the reflected index in the computation of $\x\jj$. Thus $\x\jj$ has indeed a short support of length $m\jj<m\jjj$ with support interval
  \begin{align*}
   S\jj=&\left(I_{\mu\jjj,\nu\jjj}\cup I_{2^{j+1}-1-\nu\jjj,2^{j+1}-1-\mu\jjj}\right) \cap I_{0,2^j-1} \\
   \eqcolon&I_{2^j-m\jj,2^j-1}\subsetneq I_{2^j-m\jjj,2^j-1},
  \end{align*}
  and either $\mu\jj=\mu\jjj$ or $\mu\jj=2^{j+1}-1-\nu\jjj$. \qedhere
 \end{enumerate}
\end{proof}
Note that in \ref{item:supp_i} and \ref{item:supp_ii} the support length does not change, i.e., $m\jj=m\jjj$, and that the support length $m\jj<m\jjj$ always decreases in \ref{item:supp_iii}.
\begin{ex}
 \begin{enumerate}[wide, labelwidth=!, labelindent=0pt]
  \item Let $\x\in\RR^{16}$ with nonzero entries $x_{13}$, $x_{14}$, i.e., with short support $S^{(4)}=I_{13,14}$ of length $m=2$. Assume that $m$ is known, i.e., that $M=m=2$. Then $K=2$ and the reflected periodizations $\x\jj$ for $j\in\{K,\dotsc,J\}$ of $\x$ are
  \begin{align*}
   \x=\x^{(4)}&=(0,0,0,0,0,0,0,0,0,0,0,0,0,x_{13},x_{14},0)^T, \\
   \x^{(3)}&=(0,x_{14},x_{13},0,0,0,0,0)^T, \\
   \x^{(2)}&=(0,x_{14},x_{13},0)^T.
  \end{align*}
  Here, $\x^{(3)}$ and $\x^{(2)}$ have the short support $S^{(3)}=S^{(2)}=I_{1,2}$ of length $m^{(3)}=m^{(2)}=m=2$.
  \item Let $\x\in\RR^{16}$ with nonzero entries $x_7$, $x_8$, i.e., with short support $S^{(4)}=I_{7,8}$ of length $m=2$. Again, we assume that $M=m$. Then the reflected periodizations of $\x$ are
  \begin{align*}
   \x=\x^{(4)}&=(0,0,0,0,0,0,0,x_7,x_8,0,0,0,0,0,0,0)^T, \\
   \x^{(3)}&=(0,0,0,0,0,0,0,x_7+x_8)^T, \\
   \x^{(2)}&=(x_7+x_8,0,0,0)^T.
  \end{align*}
  Here, $\x^{(3)}$ has the short support $S^{(3)}=I_{7,7}$ of length $m^{(3)}=1<m=2$ and $\x^{(2)}$ has the short support $S^{(2)}=I_{0,0}$ of length $m^{(2)}=m^{(3)}=1$. \qedhere
 \end{enumerate}
\end{ex}
The aim of our algorithm is to reconstruct $\x$ from $\cxx$ by successively computing its reflected periodizations if only an upper bound $M\geq m$ on the support length of $\x$ is known. Hence, we now investigate the structure of the support of $\x\jjj$ if $\x\jj$ is given. 
\begin{lem}\label{lem:facts}
 Let $\x\in\RR^{N}$ with $N=2^J$, $J\in\NN$, have a short support of length $m\leq M$ and assume that $\x$ satisfies $(\ref{eq:suppose})$. Set $L\coloneqq\left\lceil\log_2M\right\rceil+1$. 
 \begin{enumerate}
  \item \label{item:facts_i} There is at most one index $j'\in\{L,\dotsc,J\}$ such that $S^{(j')}\subset I_{2^{j'}-M,2^{j'}-1}$ and we have that $S^{(j'+1)}\subset I_{2^{j'}-M,2^{j'}+M-1}$ if $j'\leq J-1$.
  \item \label{item:facts_ii} If $j\in\{L,\dotsc,J-1\}\backslash\{j'\}$, then $m\jj=m\jjj$.
  \item \label{item:facts_iii} If $j\in\{L,\dotsc,J-1\}\backslash\{j'\}$ and $S\jj=I_{\mu\jj,\nu\jj}$, then either
 \[
  \x\jjj=\begin{pmatrix}
          \x\jj \\
          \boldsymbol{0}_{2^j}
         \end{pmatrix}
  \quad\text{or}\quad
  \x\jjj=\begin{pmatrix}
          \boldsymbol{0}_{2^j} \\
          \J_{2^j}\x\jj
         \end{pmatrix}
 \]
 with $S\jjj=I_{\mu\jj,\nu\jj}$ or $S\jjj=I_{2^{j+1}-1-\nu\jj,2^{j+1}-1-\mu\jj}$, where $\boldsymbol 0_{2^j}$ denotes the $2^j$-length zero vector.
 \end{enumerate}
\end{lem}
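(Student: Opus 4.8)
The plan is to reduce \ref{item:facts_i}--\ref{item:facts_iii} to the case distinction already made in the proof of Lemma \ref{lem:support} for the transition $\x\jjj\mapsto\x\jj$, which is available for every $j\in\{K,\dots,J-1\}$ and hence for every $j\in\{L,\dots,J-1\}$, since $m\le M$ gives $K\le L$. Recall that in that proof exactly one of three things happens: case \ref{item:supp_i}, the support of $\x\jjj$ sits in its first half, $S\jj=S\jjj$ and $m\jj=m\jjj$; case \ref{item:supp_ii}, the support sits in the second half, $S\jj$ is the reflection of $S\jjj$ and $m\jj=m\jjj$; or case \ref{item:supp_iii}, $\{2^j-1,2^j\}\subset S\jjj$, the support length strictly decreases and $S\jj=I_{2^j-m\jj,2^j-1}$. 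The crucial observation is that case \ref{item:supp_iii} at a level $j\ge L$ forces $S\jj\subset I_{2^j-M,2^j-1}$, because $m\jj\le m\le M$ by Lemma \ref{lem:support}. Call a level $j\in\{L,\dots,J\}$ \emph{admissible} if $S\jj\subset I_{2^j-M,2^j-1}$; then every level at which the support length drops is admissible, and once admissible levels are shown to be unique, parts \ref{item:facts_ii} and \ref{item:facts_iii} will follow.

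For the uniqueness I would argue by downward propagation, using that $2^j\ge2^L\ge2M$ for all $j\ge L$, so the blocks $I_{0,M-1}$ and $I_{2^j-M,2^j-1}$ are disjoint and any support contained in one of them lies strictly inside a single half of $\x\jj$. Suppose $j_2\in\{L+1,\dots,J\}$ is admissible. Since $2^{j_2-1}\ge2^L\ge2M>M$, the support $S^{(j_2)}$ lies in the second half, so case \ref{item:supp_ii} applies at level $j_2-1$ and $S^{(j_2-1)}$, being the reflection of $S^{(j_2)}$, is contained in $I_{0,M-1}$. A short downward induction then gives $S\jj\subset I_{0,M-1}$ for every $j\in\{L,\dots,j_2-1\}$: as soon as the support sits in $I_{0,M-1}\subset I_{0,2^j-1}$, case \ref{item:supp_i} applies and copies it unchanged. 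By disjointness of the two blocks no such $j$ is admissible, so there is at most one admissible level $j'$, which is the first part of \ref{item:facts_i}. The inclusion $S^{(j'+1)}\subset I_{2^{j'}-M,2^{j'}+M-1}$ for $j'\le J-1$ is then a three-case check on the transition $\x^{(j'+1)}\mapsto\x^{(j')}$: in case \ref{item:supp_i} it reads $S^{(j'+1)}=S^{(j')}\subset I_{2^{j'}-M,2^{j'}-1}$; in case \ref{item:supp_ii} the reflection identity together with $S^{(j')}\subset I_{2^{j'}-M,2^{j'}-1}$ forces $\mu^{(j'+1)}\ge2^{j'}$ and $\nu^{(j'+1)}\le2^{j'}+M-1$; and in case \ref{item:supp_iii} the containments $\mu^{(j'+1)}\le2^{j'}-1$, $\nu^{(j'+1)}\ge2^{j'}$ together with $m^{(j'+1)}\le M$ pin $S^{(j'+1)}$ into $I_{2^{j'}-M+1,2^{j'}+M-2}$.

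Parts \ref{item:facts_ii} and \ref{item:facts_iii} then follow at once. Let $j\in\{L,\dots,J-1\}\setminus\{j'\}$. If the transition $\x\jjj\mapsto\x\jj$ were of type \ref{item:supp_iii}, the crucial observation would make $j$ admissible, hence $j=j'$ by uniqueness, a contradiction (and if no admissible level exists at all, already a contradiction). So the transition is of type \ref{item:supp_i} or \ref{item:supp_ii}, giving $m\jj=m\jjj$, which is \ref{item:facts_ii}. In case \ref{item:supp_i} the second half of $\x\jjj$ vanishes, so $\x\jjj$ is $\x\jj$ followed by $2^j$ zeros and $S\jjj=S\jj=I_{\mu\jj,\nu\jj}$; in case \ref{item:supp_ii} the first half vanishes and $\x\jj=\J_{2^j}\x\jjj_{(1)}$, so, using $\J_{2^j}^2=\I_{2^j}$, $\x\jjj$ is $2^j$ zeros followed by $\J_{2^j}\x\jj$ and $S\jjj=I_{2^{j+1}-1-\nu\jj,2^{j+1}-1-\mu\jj}$; these are precisely the two alternatives in \ref{item:facts_iii}.

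The main obstacle is the uniqueness in \ref{item:facts_i}: the trichotomy alone describes only one level, so one has to rule out the support ever sliding from the low-index block $I_{0,M-1}$ back up into the high-index block $I_{2^j-M,2^j-1}$, and rule out the high-index block being entered by more than one length-dropping step. This is exactly where the size bound $2^L\ge2M$ is essential --- it makes the two blocks disjoint and forces a support contained in either of them to lie within a single half, which determines which of cases \ref{item:supp_i}--\ref{item:supp_iii} can occur at the next level down. The remainder is interval bookkeeping already set up in the proof of Lemma \ref{lem:support}.
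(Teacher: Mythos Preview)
Your proof is correct and follows essentially the same route as the paper's. Both arguments hinge on the same two observations: first, that an admissible level $j_2$ (your terminology) forces $S^{(j_2-1)}\subset I_{0,M-1}$ and then, by case~\ref{item:supp_i} all the way down, $S\jj\subset I_{0,M-1}$ for every $j\in\{L,\dots,j_2-1\}$, which together with the disjointness of $I_{0,M-1}$ and $I_{2^j-M,2^j-1}$ (a consequence of $2^L\ge 2M$) gives uniqueness; and second, that case~\ref{item:supp_iii} at a level $j\ge L$ forces admissibility of $j$, so that \ref{item:facts_ii} and \ref{item:facts_iii} reduce to the dichotomy \ref{item:supp_i}/\ref{item:supp_ii} from Lemma~\ref{lem:support}. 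Your three-case verification of the inclusion $S^{(j'+1)}\subset I_{2^{j'}-M,2^{j'}+M-1}$ is a bit more explicit than the paper's one-line appeal to the definition of the reflected periodization, but the content is the same.
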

\begin{proof}
\begin{enumerate}[wide, labelwidth=!, labelindent=0pt]
 \item Recall that $K=\left\lceil\log_2m\right\rceil+1\leq L$, so $\x\jj$ has a short support of length $m\jj\leq m$ for all $j\in\{L,\dotsc,J\}$ by Lemma \ref{lem:support}. Set 
 \[
  j'\coloneqq\max\left\{j\in\{L,\dotsc,J\}:S\jj\subset I_{2^j-M,2^j-1}\right\}
 \]
 if such an index exists. First we assume that  there is a $j'\in\{L,\dotsc,J\}$. Then we obtain
 \[
  \x^{(j'-1)}=\underbrace{\x^{(j')}_{(0)}}_{=\boldsymbol{0}_{2^{j'-1}}}+\J_{2^{j'-1}}\x^{(j')}_{(1)} \quad\text{and}\quad 
  S^{(j'-1)}\subset I_{0,M-1}
 \]
 if $j'>L$. Hence,
 \[
  \x\jj=\x\jjj_{(0)}+\J_{2^{j}}\underbrace{\x\jjj_{(1)}}_{=\boldsymbol{0}_{2^{j}}} \quad\text{and}\quad S\jj\subset I_{0,M-1}
 \]
 for all $j\in\{L,\dotsc,j'-2\}$, so $j'$ is the unique index with the above property. Thus, for $j\in\{L,\dotsc,j'-1\}$ the support of $\x\jj$ is contained in the first $M\leq 2^{j-1}$ entries of the vector. By definition of the reflected periodization, (\ref{eq:periodization}), we immediately obtain
 \[
  S^{(j'+1)}\subset I_{2^{j'}-M,2^{j'}+M-1}
 \] 
 if $j'\leq J-1$. For the special case that $m^{(j')}<m^{(j'+1)}$ the supports of the reflected periodizations are depicted in Figure \ref{fig:coll}.
  \def\a{2.4}
 \def\b{\a/4}
 \def\d{0.1}
 \def\dd{3/4*\d}
 \def\c{2/3}
 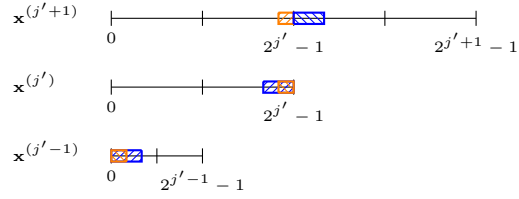
\begin{figure}[!ht]
 \tiny
 \begin{alignat*}{4}
 & \x^{(j'+1)} \quad
 && \begin{tikzpicture}[baseline=-0.75ex]
	 \tikzset{
     brace/.style={decoration={brace},decorate},
     every pin edge/.style={thin}
  }
  \draw (0,0) -- (2*\a,0);
  \draw (0,\d) -- (0,-\d) node[below] {0};
  \draw (2*\a,\d) -- (2*\a,-\d) node[below] {$2^{j'+1}-1$};
  \draw (\a/2,\d) -- (\a/2,-\d);
  \draw (\a,\d) -- (\a,-\d) node[below] {$2^{j'}-1$};
  \draw (3*\a/2,\d) -- (3*\a/2,-\d);
  \draw[orange, thick, pattern=north east lines, pattern color=orange] (\a-\b+\c*\b,3/4*\d) -- (\a-\b+\c*\b,-3/4*\d) -- (\a,-3/4*\d) -- (\a,3/4*\d) -- (\a-\b+\c*\b,3/4*\d);
  \draw[blue, thick, pattern=north west lines, pattern color=blue] (\a,-3/4*\d) -- (\a,3/4*\d) -- (\a+\c*\b,3/4*\d) -- (\a+\c*\b,-3/4*\d) -- (\a,-3/4*\d);
 \end{tikzpicture} \\
 &\x^{(j')} \quad
 && \begin{tikzpicture}[baseline=-0.75ex]
	 \tikzset{
     brace/.style={decoration={brace},decorate},
     every pin edge/.style={thin}
  }
  \draw (0,0) -- (\a,0);
  \draw (0,\d) -- (0,-\d) node[below] {0};
  \draw (\a/2,\d) -- (\a/2,-\d);
  \draw (\a,\d) -- (\a,-\d) node[below] {$2^{j'}-1$};
  \draw[blue, thick, pattern=north east lines, pattern color=blue] (\a-\c*\b,\dd) -- (\a-\c*\b,-\dd) -- (\a,-\dd) -- (\a,\dd) -- (\a-\c*\b,\dd); 
  \draw[orange, thick, pattern=north west lines, pattern color=orange] (\a+\c*\b-\b,-\dd) -- (\a+\c*\b-\b,\dd) -- (\a,\dd) -- (\a,-\dd) -- (\a+\c*\b-\b,-\dd);
 \end{tikzpicture} \\
 &\x^{(j'-1)} \quad
 && \begin{tikzpicture}[baseline=-0.75ex]
	 \tikzset{
     brace/.style={decoration={brace},decorate},
     every pin edge/.style={thin}
  }
  \draw (0,0) -- (\a/2,0);
  \draw (0,\d) -- (0,-\d) node[below] {0};
  \draw (\a/2,\d) -- (\a/2,-\d) node[below] {$2^{j'-1}-1$};
   \draw (\a/4,\d) -- (\a/4,-\d);
  \draw[blue, thick, pattern=north east lines, pattern color=blue] (\c*\b,\dd) -- (\c*\b,-\dd) -- (0,-\dd) -- (0,\dd) -- (\c*\b,\dd); 
  \draw[orange, thick, pattern=north west lines, pattern color=orange] (-\c*\b+\b,-\dd) -- (-\c*\b+\b,\dd) -- (0,\dd) -- (0,-\dd) -- (-\c*\b+\b,-\dd);
 \end{tikzpicture} 
 \end{alignat*}
 \caption{Illustration of the support of $\x^{(j'+1)}$, $\x^{(j')}$ and $\x^{(j'-1)}$ if $m^{(j')}<m^{(j'+1)}$.}
 \label{fig:coll}
 \end{figure}
 \item It follows from Lemma \ref{lem:support} that for decreasing $j$ the support length $m\jj$ cannot increase. Assume that there exists a $j_1\in\{L,\dotsc,J-1\}\backslash\{j'\}$ such that $m^{(j_1)}<m^{(j_1+1)}$. Then case \ref{item:supp_iii} in the proof of Lemma \ref{lem:support} yields that $\left\{2^{j_1}-1,2^{j_1}\right\}\subset S^{(j_1+1)}$. As $m^{(j_1+1)}\leq m\leq M$, this implies that
 \[
  S^{(j_1+1)}\subset I_{2^{j_1}-M,2^{j_1}+M-1},
 \]
 and consequently, by (\ref{eq:periodization}),
 \begin{equation}\label{eq:prop_inclusion}
  S^{(j_1)}\subset I_{2^{j_1}-M,2^{j_1}-1}. 
 \end{equation}
 This is a contradiction, since $j_1\in\{L,\dotsc,J-1\}\backslash\{j'\}$ and $j'$ is, if it exists, the unique index for which (\ref{eq:prop_inclusion}) holds. Hence, we obtain $m\jj=m\jjj$ for all $j\in\{L,\dotsc,J-1\}\backslash\{j'\}$.

 \item For $j\in\{L,\dotsc,J-1\}\backslash\{j'\}$ we have that $m\jj=m\jjj$ by \ref{item:facts_ii}, which also holds if $j'$ does not exist. Hence, the proof of Lemma \ref{lem:support}, cases \ref{item:supp_i} and \ref{item:supp_ii}, shows that either
 \[
  \x\jjj=\begin{pmatrix}
              \x\jj \\
              \boldsymbol 0_{2^j}
             \end{pmatrix} \qquad\text{or}\qquad
  \x\jjj=\begin{pmatrix}
               \boldsymbol 0_{2^j} \\
               \J_{2^j}\x\jj
              \end{pmatrix},
 \]
 as these are the only two $2^{j+1}$-length vectors arising from repeatedly reflectedly periodizing $\x$ that have the reflected periodization $\x\jj$, which can also be seen in Figure \ref{fig:no_coll}. \qedhere
  \def\a{2.4}
 \def\b{\a/4}
 \def\d{0.1}
 \def\dd{3/4*\d}
 \def\c{2/3}
 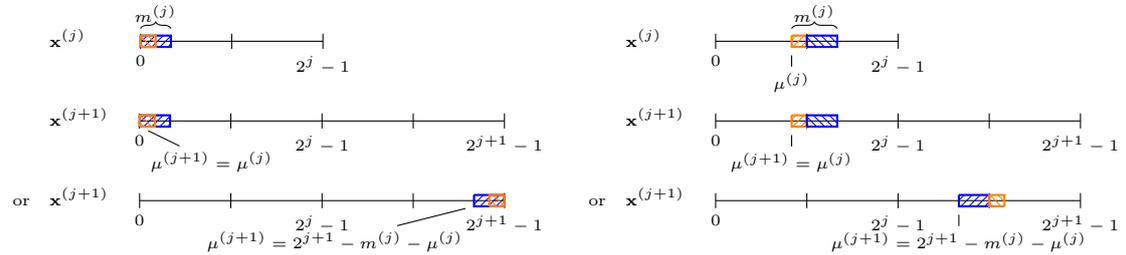
\begin{figure}[!ht]
 \tiny
 \begin{alignat*}{4}
 &\x^{(j)} \quad 
 && \begin{tikzpicture}[baseline=-0.75ex]
	\tikzset{
     brace/.style={decoration={brace},decorate},
     every pin edge/.style={thin}
  }
  \draw (0,0) -- (\a,0);
  \draw (0,\d) -- (0,-\d) node[below] {0};
  \draw (\a,\d) -- (\a,-\d) node[below] {$2^j-1$} ;
  \draw (\a/2,\d) -- (\a/2,-\d) node[below] {};
  \node (1start) at (0,3/4*\d) {};
  \node (1end) at (\c*\b,3/4*\d) {};
  \draw [brace] (1start.north) -- node [pos=0.5, above] {$m^{(j)}$} (1end.north);  
  \draw[blue, thick, pattern=north east lines, pattern color=blue] (0,3/4*\d) -- (0,-3/4*\d) -- (\c*\b,-3/4*\d) -- (\c*\b,3/4*\d) -- (0,3/4*\d);
  \draw[orange, thick, pattern=north west lines, pattern color=orange] (0,-3/4*\d) -- (0,3/4*\d) -- (\b-\c*\b,3/4*\d) -- (\b-\c*\b,-3/4*\d) -- (0,-3/4*\d);
 \end{tikzpicture} 
 \qquad
 &&\x^{(j)} \quad 
 && \begin{tikzpicture}[baseline=-0.75ex]
	\tikzset{
     brace/.style={decoration={brace},decorate},
     every pin edge/.style={thin}
  }
  \draw (0,0) -- (\a,0);
  \draw (0,\d) -- (0,-\d) node[below] {0};
  \draw (\a,\d) -- (\a,-\d) node[below] {$2^j-1$} ;
  \draw (\a/2,\d) -- (\a/2,-\d) node[below] {};
  \node (1start) at (\a/2-\b+\c*\b,3/4*\d) {};
  \node (1end) at (\a/2+\c*\b,3/4*\d) {};
  \draw [brace] (1start.north) -- node [pos=0.5, above] {$m^{(j)}$} (1end.north); 
  \draw[orange, thick, pattern=north east lines, pattern color=orange] (\a/2-\b+\c*\b,3/4*\d) -- (\a/2-\b+\c*\b,-3/4*\d) -- (\a/2,-3/4*\d) -- (\a/2,3/4*\d) -- (\a/2-\b+\c*\b,3/4*\d);
  \draw[blue, thick, pattern=north west lines, pattern color=blue] (\a/2,-3/4*\d) -- (\a/2,3/4*\d) -- (\a/2+\c*\b,3/4*\d) -- (\a/2+\c*\b,-3/4*\d) -- (\a/2,-3/4*\d);
  \node [inner sep=3pt,pin={[inner sep=2pt, pin distance=0.15cm]270:$\mu^{(j)}$}] at (\a/2-\b+\c*\b,-3/4*\d) {};
 \end{tikzpicture} \\
 & \x^{(j+1)} \quad 
 && \begin{tikzpicture}[baseline=-0.75ex]
	 \tikzset{
     brace/.style={decoration={brace},decorate},
     every pin edge/.style={thin}
  }
  \draw (0,0) -- (2*\a,0);
  \draw (0,\d) -- (0,-\d) node[below] {0};
  \draw (2*\a,\d) -- (2*\a,-\d) node[below] {$2^{j+1}-1$};
  \draw (\a/2,\d) -- (\a/2,-\d);
  \draw (\a,\d) -- (\a,-\d) node[below] {$2^j-1$};
  \draw (3*\a/2,\d) -- (3*\a/2,-\d);
  \draw[blue, thick, pattern=north east lines, pattern color=blue] (0,3/4*\d) -- (0,-3/4*\d) -- (\c*\b,-3/4*\d) -- (\c*\b,3/4*\d) -- (0,3/4*\d);
  \draw[orange, thick, pattern=north west lines, pattern color=orange] (0,-3/4*\d) -- (0,3/4*\d) -- (\b-\c*\b,3/4*\d) -- (\b-\c*\b,-3/4*\d) -- (0,-3/4*\d);
  \node [inner sep=3pt,pin={[inner sep=2pt, pin distance=0.15cm]285:$\mu\jjj=\mu\jj$}] at (0,-\dd) {};
 \end{tikzpicture}
 \qquad
 && \x^{(j+1)} \quad
 && \begin{tikzpicture}[baseline=-0.75ex]
	 \tikzset{
     brace/.style={decoration={brace},decorate},
     every pin edge/.style={thin}
  }
  \draw (0,0) -- (2*\a,0);
  \draw (0,\d) -- (0,-\d) node[below] {0};
  \draw (2*\a,\d) -- (2*\a,-\d) node[below] {$2^{j+1}-1$};
  \draw (\a/2,\d) -- (\a/2,-\d);
  \draw (\a,\d) -- (\a,-\d) node[below] {$2^j-1$};
  \draw (3*\a/2,\d) -- (3*\a/2,-\d);
  \draw[orange, thick, pattern=north east lines, pattern color=orange] (\a/2-\b+\c*\b,3/4*\d) -- (\a/2-\b+\c*\b,-3/4*\d) -- (\a/2,-3/4*\d) -- (\a/2,3/4*\d) -- (\a/2-\b+\c*\b,3/4*\d);
  \draw[blue, thick, pattern=north west lines, pattern color=blue] (\a/2,-3/4*\d) -- (\a/2,3/4*\d) -- (\a/2+\c*\b,3/4*\d) -- (\a/2+\c*\b,-3/4*\d) -- (\a/2,-3/4*\d);
  \node [inner sep=3pt,pin={[inner sep=2pt, pin distance=0.15cm]270:$\mu^{(j+1)}=\mu\jj$}] at (\a/2-\b+\c*\b,-3/4*\d) {};
 \end{tikzpicture} \\
  \text{or} \quad &\x^{(j+1)} \quad
 && \begin{tikzpicture}[baseline=-0.75ex]
	 \tikzset{
     brace/.style={decoration={brace},decorate},
     every pin edge/.style={thin}
  }
  \draw (0,0) -- (2*\a,0);
  \draw (0,\d) -- (0,-\d) node[below] {0};
  \draw (2*\a,\d) -- (2*\a,-\d) node[below] {$2^{j+1}-1$};
  \draw (\a/2,\d) -- (\a/2,-\d);
  \draw (\a,\d) -- (\a,-\d) node[below] {$2^j-1$};
  \draw (3*\a/2,\d) -- (3*\a/2,-\d);
  \draw[blue, thick, pattern=north east lines, pattern color=blue] (2*\a-\c*\b,\dd) -- (2*\a-\c*\b,-\dd) -- (2*\a,-\dd) -- (2*\a,\dd) -- (2*\a-\c*\b,\dd); 
  \draw[orange, thick, pattern=north west lines, pattern color=orange] (2*\a+\c*\b-\b,-\dd) -- (2*\a+\c*\b-\b,\dd) -- (2*\a,\dd) -- (2*\a,-\dd) -- (2*\a+\c*\b-\b,-\dd);
  \node [inner sep=3pt,pin={[inner sep=2pt, pin distance=0.15cm]255:$\mu\jjj=2^{j+1}-m\jj-\mu^{(j)}$}] at (2*\a-\c*\b,-\dd) {};
 \end{tikzpicture} 
 \qquad
  \text{or} \quad &&\x^{(j+1)} \quad
 && \begin{tikzpicture}[baseline=-0.75ex]
	 \tikzset{
     brace/.style={decoration={brace},decorate},
     every pin edge/.style={thin}
  }
  \draw (0,0) -- (2*\a,0);
  \draw (0,\d) -- (0,-\d) node[below] {0};
  \draw (2*\a,\d) -- (2*\a,-\d) node[below] {$2^{j+1}-1$};
  \draw (\a/2,\d) -- (\a/2,-\d);
  \draw (\a,\d) -- (\a,-\d) node[below] {$2^j-1$};
  \draw (3*\a/2,\d) -- (3*\a/2,-\d);
  \draw[blue, thick, pattern=north east lines, pattern color=blue] (\a+\a/2-\c*\b,\dd) -- (\a+\a/2-\c*\b,-\dd) -- (3/2*\a,-\dd) -- (3/2*\a,\dd) -- (3/2*\a-\c*\b,\dd); 
  \draw[orange, thick, pattern=north west lines, pattern color=orange] (3/2*\a-\c*\b+\b,-\dd)  -- (3/2*\a-\c*\b+\b,\dd) -- (3/2*\a,\dd) -- (3/2*\a,-\dd) -- (3/2*\a-\c*\b+\b,-\dd);
  \node [inner sep=3pt,pin={[inner sep=2pt, pin distance=0.15cm]270:$\mu^{(j+1)}=2^{j+1}-m\jj-\mu^{(j)}$}] at (3/2*\a-\c*\b,-\dd) {};
 \end{tikzpicture}
 \end{alignat*}
 \caption{Illustration of the two possibilities for the support of $\x^{(j+1)}$ for given $\x^{(j)}$ according to Lemma \ref{lem:facts} for $j\in\{L,\dotsc,j'-1\}$ (left) and $j\in\{j'+1,\dotsc,J-1\}$ (right) with $m^{(j')}<m^{(j'+1)}$.}
 \label{fig:no_coll}
 \end{figure}
 \end{enumerate}
\end{proof}
Lemma \ref{lem:facts} tells us that even if we only know an upper bound $M$ on the support length $m$, there is at most one index $j'$ such that the support of $\x^{(j')}$ is contained in the last $M$ entries. This is also the only case for which the support length of the reflected periodization of double length can increase and for which one might have to undo collisions of nonzero entries in order to compute $\x^{(j'+1)}$ from $\x^{(j')}$. For all other indices the values of the nonzero entries of $\x\jj$ and $\x\jjj$ are the same.

\section{Iterative Sparse DCT Procedures}\label{sec:procedures}
Lemma \ref{lem:periodization} implies that if $\cxx$ is known, the DCTs of all reflected periodizations $\x\jj$ are also known, as they can be obtained by selecting certain entries of $\cxx$. Analogously to \cite{plonka_smallsupp,plonka_nonneg,plonka_sparse,bit_plon}, our goal is to develop an algorithm which recovers $\x\in\RR^{2^J}$ with short support of length $m$ from $\cxx$ by successively calculating the reflected periodizations $\x^{(L)}$, $\x^{(L+1)},\dotsc,\x^{(J)}=\x$ for some starting index $L$ satisfying $m\leq2^{L-1}$. In the following we present both an algorithm for the case that the support length $m$ of $\x$ is known exactly and an algorithm that only requires an upper bound $M\geq m$ on the support length. 

We begin by developing the algorithm for a known bound $M\geq m$ on the support length, which can be easily modified to obtain the algorithm for exactly known support length. Lemma \ref{lem:facts} yields that the values of the nonzero entries and the support lengths of $\x\jj$ and $\x\jjj$ are the same for $j\neq j'$. Hence, if the support of $\x\jj$ is not contained in the last $M$ entries, we only have to find the first support index $\mu\jjj$ of $\x\jjj$, knowing that either $\mu\jjj=\mu\jj$ or $\mu\jjj=2^{j+1}-m\jj-\mu\jj$. However, for $j=j'$, we need to undo the possible collision of nonzero entries from the first and second half of $\x^{(j'+1)}$. 

\subsection{Case 1: No Collision}\label{sec:nocoll_bound}
 If $j\neq j'$, i.e., if $S\jj\not\subset I_{2^j-M,2^j-1}$, then Lemma \ref{lem:facts} implies that for $S\jj=I_{\mu\jj,\nu\jj}$ the values of the nonzero entries of $\x\jj$ and $\x\jjj$ are the same with 
 \[
  m\jjj=m\jj \qquad\text{and}\qquad S\jjj=I_{\mu\jj,\nu\jj} \quad \text{or}\quad S\jjj=I_{2^{j+1}-1-\nu\jj,2^{j+1}-1-\mu\jj}.
 \]
 Hence, we only need to determine whether the first support index is $\mu\jjj=\mu\jj$, i.e., ${\x\jjj}^T=\left({\x\jj}^T,\boldsymbol 0_{2^j}^T\right)$, or $\mu\jjj=2^{j+1}-1-\nu\jj$, i.e., ${\x\jjj}^T=\left(\boldsymbol 0_{2^j}^T,{\J_{2^j}\x\jj}^T\right)$. In order to find out which is the correct first support index, we employ a nonzero entry of $\left(\x\jjj\right)\dct$. First we show how such a nonzero entry can be found efficiently. 
 
 For this we require the notion of the \emph{odd Vandermonde matrix}, which is defined as  
 \[
 \V^{\text{odd}}\left(x_0,\dotsc,x_{n}\right)\coloneqq\left({x_k}^{2l+1}\right)_{k,\,l=0}^{n}
 \]
 for $\left(x_k\right)_{k=0}^n\in\RR^{n+1}$. Recall that the \emph{Vandermonde matrix} 
 \[
  \V\left(x_0,\dotsc,x_n\right)\coloneqq\left({x_k}^l\right)_{k,\,l=0}^n
 \]
 has determinant
 \[
  \det\left(\V\left(x_0,\dotsc,x_n\right)\right)=\prod_{0\leq k<l\leq n}\left(x_l-x_k\right).
 \]
 \begin{lem}\label{lem:odd_vander}
  Let $x_0,\dotsc,x_n\in\RR\backslash\{0\}$ be pairwise distinct such that $\left|x_k\right|\neq\left|x_l\right|$ for all $k\neq l$, where $k,l\in\{0,\dotsc,n\}$. Then the odd Vandermonde matrix $\V^{\mathrm{odd}}\left(x_0,\dotsc,x_n\right)=\left({x_k}^{2l+1}\right)_{k,\,l=0}^n$ is invertible with
  \begin{align*}
   \det\left(\V^{\mathrm{odd}}\left(x_0,\dotsc,x_n\right)\right)
   &=\prod_{j=0}^n x_j\cdot \det\left(\V\left({x_0}^2,\dotsc,{x_n}^2\right)\right) 
   =\prod_{j=0}^n x_j\prod_{0\leq k<l\leq n}\left({x_l}^2-{x_k}^2\right).
  \end{align*}
 \end{lem}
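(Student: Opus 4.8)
The plan is to reduce the odd Vandermonde determinant to an ordinary Vandermonde determinant by factoring one power of $x_k$ out of each row. Concretely, since
\[
 \V^{\mathrm{odd}}\left(x_0,\dotsc,x_n\right)=\left({x_k}^{2l+1}\right)_{k,\,l=0}^n=\left(x_k\cdot\left({x_k}^2\right)^l\right)_{k,\,l=0}^n,
\]
the $k$-th row of $\V^{\mathrm{odd}}(x_0,\dotsc,x_n)$ is exactly $x_k$ times the $k$-th row of the ordinary Vandermonde matrix $\V\left({x_0}^2,\dotsc,{x_n}^2\right)=\left(\left({x_k}^2\right)^l\right)_{k,\,l=0}^n$ in the squared nodes. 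Using multilinearity of the determinant in the rows, I would pull the factor $x_k$ out of row $k$ for each $k\in\{0,\dotsc,n\}$, which yields
\[
 \det\left(\V^{\mathrm{odd}}\left(x_0,\dotsc,x_n\right)\right)=\left(\prod_{j=0}^n x_j\right)\det\left(\V\left({x_0}^2,\dotsc,{x_n}^2\right)\right).
\]

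Next I would invoke the standard Vandermonde determinant formula (recalled in the excerpt just before the lemma) with nodes ${x_0}^2,\dotsc,{x_n}^2$ to get
\[
 \det\left(\V\left({x_0}^2,\dotsc,{x_n}^2\right)\right)=\prod_{0\leq k<l\leq n}\left({x_l}^2-{x_k}^2\right),
\]
which gives the claimed closed form for $\det\left(\V^{\mathrm{odd}}\left(x_0,\dotsc,x_n\right)\right)$. Finally, for invertibility I would check that this product is nonzero under the hypotheses: $\prod_{j=0}^n x_j\neq0$ because each $x_j\neq0$, and each factor ${x_l}^2-{x_k}^2=(x_l-x_k)(x_l+x_k)$ is nonzero because $|x_k|\neq|x_l|$ for $k\neq l$ forces both $x_l\neq x_k$ and $x_l\neq -x_k$. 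Hence the determinant is nonzero and the matrix is invertible.

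There is essentially no hard step here; the only thing to be careful about is the bookkeeping of the multilinearity argument (one must pull the scalar out of every row, not just one, so a total factor of $\prod_{j=0}^n x_j$ appears, not a single $x_j$) and the observation that the seemingly stronger hypothesis $|x_k|\neq|x_l|$ — rather than mere distinctness — is exactly what is needed to guarantee ${x_l}^2\neq{x_k}^2$, i.e., that the squared nodes are themselves pairwise distinct so that the Vandermonde formula yields a nonzero value. I would state this last point explicitly, since it is the reason the extra assumption on absolute values is imposed.
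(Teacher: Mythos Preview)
Your proof is correct and follows essentially the same approach as the paper: factor $x_k$ out of the $k$-th row to reduce to the ordinary Vandermonde matrix in the squared nodes, apply the standard Vandermonde formula, and then observe that the hypotheses $x_j\neq0$ and $|x_k|\neq|x_l|$ make every factor nonzero. The paper's argument is identical in substance, only presented via an explicit matrix display rather than an appeal to multilinearity.
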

 \begin{proof}
  \begin{align*}
   &\det\left(\V^{\textrm{odd}}\left(x_0,\dotsc,x_n\right)\right) 
   =\det\begin{pmatrix}
         x_0 & {x_0}^3 & {x_0}^5 & \dots & {x_0}^{2n+1} \\
         x_1 & {x_1}^3 & {x_1}^5 & \dots & {x_1}^{2n+1} \\
         \vdots & \vdots & \vdots &      & \vdots       \\
         x_n & {x_n}^3 & {x_n}^5 & \dots & {x_n}^{2n+1}
        \end{pmatrix} \\
   =&\prod_{j=0}^n x_j\cdot\det\begin{pmatrix}
          1 & {x_0}^2 & {x_0}^4 & \dots & {x_0}^{2n} \\
          1 & {x_1}^2 & {x_1}^4 & \dots & {x_1}^{2n} \\
          \vdots & \vdots & \vdots &           & \vdots       \\
          1 & {x_n}^2 & {x_n}^4 & \dots & {x_n}^{2n}  
         \end{pmatrix} 
   =\prod_{j=0}^n {x_j}\cdot\det\left(\V\left({x_0}^2,\dotsc,{x_n}^2\right)\right)  \\
   =&\prod_{j=0}^n x_j \prod_{0\leq k<l\leq n}\left({x_l}^2-{x_k}^2\right).    
  \end{align*}
  As $x_k\neq0$ and $\left|x_k\right|\neq\left|x_l\right|$ for $k\neq l$, $k,l\in\{0,\dotsc,n\}$,  $\V^{\text{odd}}\left(x_0,\dotsc,x_n\right)$ is invertible. 
 \end{proof}
 With the help of odd Vandermonde matrices we can prove the existence of an oddly indexed nonzero entry of $\left(\x\jjj\right)\dct$.
 \begin{lem}\label{lem:nonzero}
  Let $\x\in\RR^N$ with $N=2^J$, $J\in\NN$, have a short support of length $m\leq M$ and assume that $\x$ satisfies $(\ref{eq:suppose})$. Set $L\coloneqq\left\lceil\log_2M\right\rceil+1$. For $j\in\{L,\dotsc,J-1\}\backslash\{j'\}$ let $\x\jj$ be the $2^j$-length reflected periodization of $\x$ with support length $m\jj$. Assume that we have access to all entries of $\cxx$. Then the odd partial vector $\left(\left(x\jjj\right)\dct_{2k+1}\right)_{k=0}^{m\jj-1}$ of $\left(\x\jjj\right)\dct$ has at least one nonzero entry.
 \end{lem}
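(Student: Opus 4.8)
The plan is to strip off one level of the real cosine-matrix factorization so that the oddly indexed part of $\left(\x\jjj\right)\dct$ becomes a DCT-IV of $\x\jj$, and then to deduce the existence of a nonzero entry from the invertibility of an odd Vandermonde matrix via Lemma~\ref{lem:odd_vander}.

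First I would apply the even--odd permutation $\PP_{2^{j+1}}$ to the identity $\left(\x\jjj\right)\dct=\Ct_{2^{j+1}}\x\jjj$ and insert the factorization of $\Ct_{2^{j+1}}$ from Lemma~\ref{lem:factorization}, in the same way that $(\ref{eq:odd})$ was obtained in the proof of Lemma~\ref{lem:periodization}. Using $(\ref{eq:permutation})$, $(\ref{eq:tn})$ and the definition $(\ref{eq:periodization})$ of the reflected periodization, the lower block of the resulting identity reads
\[
 \left(\left(x\jjj\right)\dct_{2k+1}\right)_{k=0}^{2^j-1}=\tfrac{1}{\sqrt{2}}\,\Cf_{2^j}\!\left(\x\jjj_{(0)}-\J_{2^j}\x\jjj_{(1)}\right).
\]
Since $j\neq j'$, Lemma~\ref{lem:facts}, part~\ref{item:facts_iii}, gives $\x\jjj=\left({\x\jj}^T,\boldsymbol{0}_{2^j}^T\right)^T$ or $\x\jjj=\left(\boldsymbol{0}_{2^j}^T,\left(\J_{2^j}\x\jj\right)^T\right)^T$, and in either case $\x\jjj_{(0)}-\J_{2^j}\x\jjj_{(1)}=\pm\x\jj$ because $\J_{2^j}^2=\I_{2^j}$. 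Hence $\left(x\jjj\right)\dct_{2k+1}=\pm\tfrac{1}{\sqrt{2}}\left(\Cf_{2^j}\x\jj\right)_k$ for every $k$, so the claim is equivalent to the statement that the first $m\jj$ entries of the DCT-IV $\Cf_{2^j}\x\jj$ do not all vanish.

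Suppose they did. Writing $m\coloneqq m\jj$, $n\coloneqq 2^j$, $\mu\coloneqq\mu\jj$ and recalling $x\jj_l=0$ for $l\notin I_{\mu,\mu+m-1}$, this assumption becomes the homogeneous linear system
\[
 \sum_{l=\mu}^{\mu+m-1}\cos\!\left(\frac{(2k+1)(2l+1)\pi}{4n}\right)x\jj_l=0,\qquad k=0,\dotsc,m-1.
\]
Thus it suffices to show that the $m\times m$ matrix with entries $\cos\frac{(2k+1)(2l+1)\pi}{4n}$, $0\le k\le m-1$, $\mu\le l\le\mu+m-1$, is invertible: then $x\jj_l=0$ for all such $l$, contradicting $x\jj_\mu\neq0$. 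Set $t_l\coloneqq\cos\frac{(2l+1)\pi}{4n}$; since $\frac{(2l+1)\pi}{4n}\in\left(0,\frac{\pi}{2}\right)$ for $l\in\{0,\dotsc,n-1\}$ and the cosine is strictly decreasing there, $t_\mu,\dotsc,t_{\mu+m-1}$ are pairwise distinct and strictly positive, so in particular $t_l\neq0$ and $|t_l|\neq|t_{l'}|$ for $l\neq l'$. Writing $\cos\bigl((2k+1)\varphi\bigr)=T_{2k+1}(\cos\varphi)$ with $T_{2k+1}$ the Chebyshev polynomial of the first kind — an odd polynomial of degree $2k+1$ with nonzero leading coefficient — we may expand $\cos\frac{(2k+1)(2l+1)\pi}{4n}=\sum_{i=0}^{m-1}a_{k,i}\,t_l^{\,2i+1}$ with $a_{k,i}=0$ for $i>k$ and $a_{k,k}\neq0$. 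Hence the matrix above factors as $\A\,\V^{\mathrm{odd}}(t_\mu,\dotsc,t_{\mu+m-1})^{T}$, where $\A\coloneqq(a_{k,i})_{k,i=0}^{m-1}$ is lower triangular with nonzero diagonal and $\V^{\mathrm{odd}}(t_\mu,\dotsc,t_{\mu+m-1})$ is invertible by Lemma~\ref{lem:odd_vander}; therefore it is invertible.

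I expect the main obstacle to be this final step. The partial cosine-IV matrix is not itself an odd Vandermonde matrix, so one has to make the triangular change of basis given by the Chebyshev polynomials explicit and, crucially, check that the nodes $\cos\frac{(2l+1)\pi}{4n}$, $l=\mu,\dotsc,\mu+m-1$, are nonzero and have pairwise distinct absolute values — precisely the hypotheses of Lemma~\ref{lem:odd_vander}. The rest, namely the permutation/factorization bookkeeping of the first step and the appeal to the support structure from Lemma~\ref{lem:facts}, is routine.
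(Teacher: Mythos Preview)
Your proof is correct and follows essentially the same approach as the paper: both strip off one level of the $\Ct$-factorization to reduce to a DCT-IV, then show invertibility of the relevant $m\jj\times m\jj$ submatrix via the Chebyshev triangular change of basis combined with Lemma~\ref{lem:odd_vander}. The only cosmetic difference is that you immediately substitute $\x\jjj_{(0)}-\J_{2^j}\x\jjj_{(1)}=\pm\x\jj$, whereas the paper carries the equivalent expression $2\x\jjj_{(0)}-\x\jj$ through the argument and invokes Lemma~\ref{lem:facts}~\ref{item:facts_iii} only at the end.
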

 \begin{proof}
  We obtain from (\ref{eq:permutation}) and (\ref{eq:odd}) that
  \begin{align}
   \begin{pmatrix}
    \left(\left(x\jjj\right)\dct_{2k}\right)_{k=0}^{2^j-1} \\
    \left(\left(x\jjj\right)\dct_{2k+1}\right)_{k=0}^{2^j-1}
   \end{pmatrix}
    =&\frac{1}{\sqrt{2}}\begin{pmatrix}
                                                           \Ct_{2^j} & \\
                                                           & \Cf_{2^j} 
                                                          \end{pmatrix} 
                                                          \begin{pmatrix}
                                                           \I_{2^j} & \J_{2^j} \\
                                                           \I_{2^j} & -\J_{2^j}
                                                          \end{pmatrix}
                                                          \begin{pmatrix}
                                                           \x\jjj_{(0)} \\
                                                           \x\jjj_{(1)}
                                                          \end{pmatrix} \notag \\
   =&\frac{1}{\sqrt{2}}\begin{pmatrix}
                        \Ct_{2^j} & \\
                        & \Cf_{2^j} 
                       \end{pmatrix}
                       \begin{pmatrix}
                        \x\jj \\
                        \x\jjj_{(0)}-\J_{2^j}\x\jjj_{(1)}
                       \end{pmatrix} \notag \\
   =&\frac{1}{\sqrt{2}}\begin{pmatrix}
                       \left(\x\jj\right)\dct \\
                       \left(2\x\jjj_{(0)}-\x\jj\right)\dctf
                      \end{pmatrix}, \label{eq:even_odd}
  \end{align}
  where we used that $\J_{2^j}\x\jjj_{(1)}=\x\jj-\x\jjj_{(0)}$ by (\ref{eq:periodization}). If we denote the support interval of $\x\jjj_{(0)}$ by $S\jjj_{(0)}$, Lemma \ref{lem:facts} yields that $S\jjj_{(0)}=S\jj$ or $S\jjj_{(0)}=\emptyset$, since $j\neq j'$. Consequently, $S\jjj_{(0)}\subset S\jj$ and $S\left(2\x\jjj_{(0)}-\x\jj\right)\subset S\jj$, where $S(\y)$ is the support interval of $\y\in\RR^n$. As $\left|S\jj\right|=m\jj\leq m\leq M$, we can restrict (\ref{eq:even_odd}) to the rows corresponding to the first $m\jj$ oddly indexed entries of $\left(\x\jjj\right)\dct$ and find
  \begin{align}
   &\left(\left(x\jjj\right)\dct_{2k+1}\right)_{k=0}^{m\jj-1} 
   =\frac{1}{\sqrt{2}}\left(\left(\Cf_{2^j}\right)_{k,\,l}\right)_{k,\,l=0}^{m\jj-1,\,2^j-1} \left(2\x\jjj_{(0)}-\x\jj\right) \notag \\
   =&\frac{1}{\sqrt{2^j}}\left(\sum_{l\in S\jj}\cos\left(\frac{(2k+1)(2l+1)\pi}{4\cdot 2^j}\right)\left(2\x\jjj_{(0)}-\x\jj\right)_l\right)_{k=0}^{m\jj-1} \notag \\
   \eqcolon&\frac{1}{\sqrt{2^j}}\cdot\T\jj\cdot\left(\left(2\x\jjj_{(0)}-\x\jj\right)_l\right)_{l\in S\jj}. \label{eq:cheb_mat}
  \end{align}
  Note that $\T\jj$ is the restriction of the cosine matrix of type IV without the normalization factor to the first $m\jj$ rows and the $m\jj$ columns indexed by $S\jj$. We show that $\T\jj$ is invertible, using Chebyshev polynomials.  
  For $x\in\RR$ with $|x|\leq 1$ and $n\in\NN_0$ the \emph{Chebyshev polynomial of the first kind of degree} $n$ is defined as
  \[
   T_n(x)\coloneqq\cos(n\arccos x) \eqqcolon\sum_{l=0}^n a_{n,l}x^l.
  \]
  Note that the leading coefficient of $T_n$ satisfies
  \begin{equation}\label{eq:cheb_lead}
   a_{n,n}=\begin{cases}
         1 & \text{if }n=0, \\
         2^{n-1} & \text{if } n\geq 1,
        \end{cases}
  \end{equation}
  and that $T_n$ is odd if $n$ is odd, and $T_n$ is even if $n$ is even.
  
  Further, for $n\in\NN$, we define the \emph{Chebyshev zero nodes} 
  \[
   t_{l,n}\coloneqq\cos\left(\frac{(2l+1)\pi}{2n}\right), \qquad l\in\{0,\dotsc,n-1\},
  \]
  which are exactly the $n$ zeros of the $n$th Chebyshev polynomial of the first kind. Then
  \begin{equation}\label{eq:cheb_eval}
   T_k\left(t_{l,n}\right)=\cos\left(\frac{k(2l+1)\pi}{2n}\right)
  \end{equation}
  for all $l\in\{0,\dotsc,n-1\}$, $n\in\NN$ and $k\in\NN_0$, since  $\left|t_{l,n}\right|\leq 1$. Using (\ref{eq:cheb_eval}), the coefficient representation of the Chebyshev polynomials and the fact that $a_{2k+1,2l}=0$ for all $l\in\{0,\dotsc,k\}$ and $k\in\NN_0$, we find for $\T\jj$ that
  \begin{align}
   &\T\jj=\left(\cos\left(\frac{(2k+1)(2l+1)\pi}{2\cdot2^{j+1}}\right)\right)_{k=0,\,l\in S\jj}^{m\jj-1}
   =\left(T_{2k+1}\left(t_{l,2^{j+1}}\right)\right)_{k=0,\,l\in S\jj}^{m\jj-1} \notag \\
   =&\left(\sum_{\substack{r'=0 \\ r'\equiv 1\mods 2}}^{2k+1} a_{2k+1,r'}\cdot t_{l,2^{j+1}}^{r'}\right)_{k=0,\,l\in S\jj}^{m\jj-1} 
   =\left(a_{2k+1,2r+1}\right)_{k,\,r=0}^{m\jj-1}\cdot \left(t_{l,2^{j+1}}^{2r+1}\right)_{r=0,\,l\in S\jj}^{m\jj-1} \label{eq:cheb_coeff}\\
   =&\begin{pmatrix}
      a_{11} & 0      & 0      & \dots & 0 \\
      a_{31} & a_{33} & 0      & \dots & 0 \\
      \vdots & \vdots & \vdots &       & 0 \\
      a_{2m\jj-1,1} & a_{2m\jj-1,3} & a_{2m\jj-1,5} & \dots & a_{2m\jj-1,2m\jj-1}
     \end{pmatrix}
     \begin{pmatrix}
      \left(t_{l,2^{j+1}}\right)_{l\in S\jj}^T \\
      \left({t_{l,2^{j+1}}}^3\right)_{l\in S\jj}^T \\
      \vdots \\
      \left({t_{l,2^{j+1}}}^{2m\jj-1}\right)_{l\in S\jj}^T \\
     \end{pmatrix} \notag \\
     \eqcolon& \A\jj\cdot \V^{\mathrm{odd}}\left(\left(t_{l,2^{j+1}}\right)_{l\in S\jj}\right)^T, \label{eq:matrix_prod}
  \end{align}
  where we set $a_{2k+1,2r+1}\coloneqq0$ for $r\in\left\{k+1,\dotsc,m\jj-1\right\}$ in (\ref{eq:cheb_coeff}). By (\ref{eq:cheb_lead}) the triangular matrix $\A\jj$ is invertible. Furthermore, since $S\jj\subset I_{0,2^j-1}$, 
  \[
   \frac{(2l+1)\pi}{2\cdot 2^{j+1}}\in \left(0,\frac{\pi}{2}\right)
  \]
  for all $l\in S\jj$. Consequently, we have that
  \[
   t_{l,2^{j+1}}=\cos\left(\frac{(2l+1)\pi}{2\cdot 2^{j+1}}\right)\in (0,1),
  \]
  and $\left|t_{k,2^{j+1}}\right|\neq\left|t_{l,2^{j+1}}\right|$ for all $k\neq l$, $k,l\in S\jj$, as the cosine is bijective on $\left(0,\frac{\pi}{2}\right)$. Hence $\V^{\mathrm{odd}}\left(\left(t_{l,2^{j+1}}\right)_{l\in S\jj}\right)^T$ is invertible by Lemma \ref{lem:odd_vander}, so $\T\jj$ is invertible as well. Assume now that $\left(x\jjj\right)_{2k+1}\dct=0$ for all $k\in\left\{0,\dotsc,m\jj-1\right\}$. Then  (\ref{eq:cheb_mat}) and (\ref{eq:matrix_prod}) yield
  \begin{align}
   &\boldsymbol{0}_{m\jj}=\left(\left(x\jjj\right)_{2k+1}\dct\right)_{k=0}^{m\jj-1}=\frac{1}{\sqrt{2^j}}\T\jj\left(\left(2\x\jjj_{(0)}-\x\jj\right)_l\right)_{l\in S\jj} \notag \\
   \Leftrightarrow \quad &\boldsymbol{0}_{m\jj}=\left(\left(2\x\jjj_{(0)}-\x\jj\right)_l\right)_{l\in S\jj}. \label{eq:contradiction}
  \end{align}
  However, since $j\neq j'$, we have that $\x\jjj_{(0)}=\x\jj$ and $\x\jjj_{(1)}=\boldsymbol{0}_{2^j}$, or $\x\jjj_{(0)}=\boldsymbol{0}_{2^j}$ and $\x\jjj_{(1)}=\J_{2^j}\x\jj$. In either case (\ref{eq:contradiction}) is only possible if $\x\jj=\boldsymbol{0}_{2^j}$, which is a contradiction to (\ref{eq:suppose}) and the fact that $\x\neq\boldsymbol 0_{N}$ has a short support of length $m$. Hence, there exists an index $k_0\in\left\{0,\dotsc,m\jj-1\right\}$ such that $\left(x\jjj\right)\dct_{2k_0+1}\neq 0$.
  
  For the implementation of this procedure, using Lemma \ref{lem:periodization}, set
  \[
   k_0\coloneqq\underset{k\in\left\{0,\dotsc,m\jj-1\right\}}{\argmax}\left\{\left|\sqrt{2}^{J-j-1}\cx_{2^{J-j-1}(2k+1)}\right|\right\}.
  \]
  Then $\left(x\jjj\right)\dct_{2k_0+1}\neq0$ and it is likely that this entry is not too close to zero, which is supported empirically by the numerical experiments in Section \ref{sec:numerics}.
 \end{proof}

 Now we show how $\x\jjj$ can be computed from $\x\jj$ and one oddly indexed nonzero entry of $\left(\x\jjj\right)\dct$ using the following theorem.
 \begin{thm}\label{thm:nocoll}
  Let $\x\in\RR^{N}$ with $N=2^J$, $J\in\NN$, have a short support of length $m\leq M$ and assume that $\x$ satisfies $(\ref{eq:suppose})$. Set $L\coloneqq\left\lceil\log_2M\right\rceil+1$. For $j\in\{L,\dotsc,J-1\}\backslash\{j'\}$ let $\x\jj$ be the $2^j$-length reflected periodization of $\x$ with support length $m\jj$. Assume that we have access to all entries of $\cxx$. Then $\x\jjj$ can be uniquely recovered from $\x\jj$ and one nonzero entry of $\left(\sqrt{2}^{J-j-1}\cx_{2^{J-j-1}(2k+1)}\right)_{k=0}^{m\jj-1}$, using $\mathcal{O}\left(m\jj\right)$ operations.
 \end{thm}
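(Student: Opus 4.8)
The plan is to convert the dichotomy of Lemma~\ref{lem:facts}\,\ref{item:facts_iii} into an explicit sign test. Since $j\neq j'$, that lemma tells us that $\x\jjj$ equals \emph{either} $\left({\x\jj}^T,\boldsymbol 0_{2^j}^T\right)^T$ \emph{or} $\left(\boldsymbol 0_{2^j}^T,\left(\J_{2^j}\x\jj\right)^T\right)^T$; writing $\x\jjj_{(0)}=\tfrac{1+\sigma}{2}\,\x\jj$ with an unknown $\sigma\in\{+1,-1\}$, this is the same as saying $2\x\jjj_{(0)}-\x\jj=\sigma\,\x\jj$, so the entire task reduces to determining $\sigma$. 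To extract $\sigma$ I would reuse the computation~(\ref{eq:even_odd}) from the proof of Lemma~\ref{lem:nonzero}: the oddly indexed entries of $\left(\x\jjj\right)\dct$ form the vector $\tfrac{1}{\sqrt 2}\left(2\x\jjj_{(0)}-\x\jj\right)\dctf=\tfrac{\sigma}{\sqrt 2}\,\Cf_{2^j}\x\jj$, and restricting to the first $m\jj$ of them, (\ref{eq:cheb_mat}) gives
\[
 \left(\left(x\jjj\right)\dct_{2k+1}\right)_{k=0}^{m\jj-1}
 =\frac{\sigma}{\sqrt{2^j}}\left(\sum_{l\in S\jj}\cos\!\left(\frac{(2k+1)(2l+1)\pi}{4\cdot 2^j}\right)x\jj_l\right)_{k=0}^{m\jj-1}.
\]

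By Lemma~\ref{lem:periodization} the numbers $\sqrt{2}^{J-j-1}\cx_{2^{J-j-1}(2k+1)}$, $k\in\{0,\dots,m\jj-1\}$, are precisely these entries $\left(x\jjj\right)\dct_{2k+1}$, and Lemma~\ref{lem:nonzero} guarantees that at least one of them — say the one at index $k_0$, which can be taken as the $\argmax$ of the absolute values as in the proof of Lemma~\ref{lem:nonzero} — is nonzero. Since $\x\jj$ is known and its support $S\jj$ contains only $m\jj$ indices, I would compute
\[
 \beta\coloneqq\frac{1}{\sqrt{2^j}}\sum_{l\in S\jj}\cos\!\left(\frac{(2k_0+1)(2l+1)\pi}{4\cdot 2^j}\right)x\jj_l
\]
in $\mathcal O\!\left(m\jj\right)$ operations. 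The displayed identity reads $\left(x\jjj\right)\dct_{2k_0+1}=\sigma\beta$; as the left-hand side is nonzero we get $\beta\neq0$, so $\sigma=\operatorname{sign}\!\left(\left(x\jjj\right)\dct_{2k_0+1}/\beta\right)$ is well defined and hence determined.

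Finally, with $\sigma$ known I would output $\x\jjj$: for $\sigma=+1$ set ${\x\jjj}^T=\left({\x\jj}^T,\boldsymbol 0_{2^j}^T\right)$ with $S\jjj=S\jj$, and for $\sigma=-1$ set ${\x\jjj}^T=\left(\boldsymbol 0_{2^j}^T,\left(\J_{2^j}\x\jj\right)^T\right)$ with $S\jjj=I_{2^{j+1}-1-\nu\jj,\,2^{j+1}-1-\mu\jj}$, which also yields $\mu\jjj$ and $m\jjj=m\jj$. Uniqueness is immediate: by Lemma~\ref{lem:facts}\,\ref{item:facts_iii} these are the only two candidates, and exactly one is picked. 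For the runtime, locating $k_0$ among the $m\jj$ candidates, evaluating $\beta$, the sign comparison, and recording the $m\jj$ nonzero values together with the new support interval of $\x\jjj$ each take $\mathcal O\!\left(m\jj\right)$ operations. The only delicate point is the one flagged above — that the chosen nonzero entry really does pin down $\sigma$, i.e.\ that $\beta\neq0$ — together with keeping every step confined to the $m\jj$ support indices rather than touching the full $2^j$-length zero vector, so that the cost stays $\mathcal O\!\left(m\jj\right)$ and does not grow to $\mathcal O\!\left(2^j\right)$; beyond that, the argument is routine bookkeeping.
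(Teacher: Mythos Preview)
Your proposal is correct and follows essentially the same approach as the paper: both reduce to the dichotomy of Lemma~\ref{lem:facts}\,\ref{item:facts_iii}, compute the quantity $\beta=(u^0)\dct_{2k_0+1}$ from the $m\jj$ support entries of $\x\jj$, and determine the correct alternative by a sign comparison against the nonzero sample $\left(x\jjj\right)\dct_{2k_0+1}$ supplied by Lemma~\ref{lem:nonzero}. The only cosmetic difference is that you package the two cases via the sign variable $\sigma$ and reuse (\ref{eq:even_odd}) and (\ref{eq:cheb_mat}) directly, whereas the paper re-derives the DCTs of the two candidates $\uu^0,\uu^1$ from the factorization in Lemma~\ref{lem:factorization}; the computations and cost analysis coincide.
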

 \begin{proof}
  By Lemma \ref{lem:facts} \ref{item:facts_iii} there are precisely two vectors in $\RR^{2^{j+1}}$ that arise from reflectedly periodizing $\x$ and have the given reflected periodization $\x\jj$, namely
  \[
   \uu^0\coloneqq\begin{pmatrix}
                 \x\jj \\
                 \boldsymbol{0}_{2^j}
                \end{pmatrix}
   \quad\text{and}\quad 
   \uu^1\coloneqq\begin{pmatrix}
                 \boldsymbol{0}_{2^j} \\
                 \J_{2^j}\x\jj
                \end{pmatrix}.
  \]
  Assuming that $S\jj=I_{\mu\jj,\nu\jj}$, $\uu^0$ has the first support index $\mu\jj$, $\uu^1$ has the first support index $2^{j+1}-m\jj-\mu\jj$ and both have a support of length $m\jjj=m\jj$. Let us now compare the DCTs of $\uu^0$ and $\uu^1$. Lemma \ref{lem:factorization} yields
  \begin{align*}
   \begin{pmatrix}
    \left(\left(u^0\right)\dct_{2k}\right)_{k=0}^{2^j-1} \\
    \left(\left(u^0\right)\dct_{2k+1}\right)_{k=0}^{2^j-1}
   \end{pmatrix}
   =&\PP_{2^{j+1}}\left(\uu^0\right)\dct=\frac{1}{\sqrt{2}}\begin{pmatrix}
                                                           \Ct_{2^j} & \\
                                                           & \Cf_{2^j} 
                                                          \end{pmatrix} 
                                                          \begin{pmatrix}
                                                           \I_{2^j} & \J_{2^j} \\
                                                           \I_{2^j} & -\J_{2^j}
                                                          \end{pmatrix}
                                                          \begin{pmatrix}
                                                           \x\jj \\
                                                           \boldsymbol{0}_{2^j}
                                                          \end{pmatrix} \\
   =&\frac{1}{\sqrt{2}}\begin{pmatrix}
                        \Ct_{2^j} & \\
                        & \Cf_{2^j} 
                       \end{pmatrix}
                       \begin{pmatrix}
                        \x\jj \\
                        \x\jj
                       \end{pmatrix}
   =\frac{1}{\sqrt{2}}\begin{pmatrix}
                       \left(\x\jj\right)\dct \\
                       \left(\x\jj\right)^{\widehat{\mathrm{IV}}}
                      \end{pmatrix}
  \end{align*}
  and 
  \begin{align*}
   \begin{pmatrix}
    \left(\left(u^1\right)\dct_{2k}\right)_{k=0}^{2^j-1} \\
    \left(\left(u^1\right)\dct_{2k+1}\right)_{k=0}^{2^j-1}
   \end{pmatrix}
    =&\frac{1}{\sqrt{2}}\begin{pmatrix}
                                                           \Ct_{2^j} & \\
                                                           & \Cf_{2^j} 
                                                          \end{pmatrix} 
                                                          \begin{pmatrix}
                                                           \I_{2^j} & \J_{2^j} \\
                                                           \I_{2^j} & -\J_{2^j}
                                                          \end{pmatrix}
                                                          \begin{pmatrix}
                                                           \boldsymbol{0}_{2^j} \\
                                                           \J_{2^j}\x\jj
                                                          \end{pmatrix} \\
   =&\frac{1}{\sqrt{2}}\begin{pmatrix}
                        \Ct_{2^j} & \\
                        & \Cf_{2^j} 
                       \end{pmatrix}
                       \begin{pmatrix}
                        \J_{2^j}\left(\J_{2^j}\x\jj\right) \\
                        -\J_{2^j}\left(\J_{2^j}\x\jj\right)
                       \end{pmatrix}
   =\frac{1}{\sqrt{2}}\begin{pmatrix}
                       \left(\x\jj\right)\dct \\
                       -\left(\x\jj\right)^{\widehat{\mathrm{IV}}}
                      \end{pmatrix}.
  \end{align*}
  Consequently, we have that
  \begin{equation}\label{eq:u}
   \left(u^1\right)\dct_{2k+1}=-\left(u^0\right)\dct_{2k+1}, \quad k\in\left\{0,\dotsc,2^j-1\right\},
  \end{equation}
  for all oddly indexed entries of $\left(\uu^0\right)\dct$ and $\left(\uu^1\right)\dct$. In order to decide whether $\x\jjj=\uu^0$ or $\x\jjj=\uu^1$ we compare a nonzero entry $\left(x\jjj\right)\dct_{2k_0+1}=\sqrt{2}^{J-j-1}\cx_{2^{J-j-1}(2k_0+1)}\neq 0$ to the corresponding entry of $\uu^0$. By Lemma \ref{lem:nonzero} $\left(x\jjj\right)\dct_{2k_0+1}$ can be found by examining $m\jj$ entries of $\cxx$. If $\left(u^0\right)\dct_{2k_0+1}=\left(x\jjj\right)\dct_{2k_0+1}$, then $\x\jjj=\uu^0$ by (\ref{eq:u}), and if $\left(u^0\right)\dct_{2k_0+1}=-\left(x\jjj\right)\dct_{2k_0+1}$, then $\x\jjj=\uu^1$. Numerically, we set $\x\jjj=\uu^0$ if
  \[
   \left|\left(u^0\right)\dct_{2 k_0+1}-\sqrt 2^{J-j-1}\cx_{2^{J-j-1}(2 k_0+1)}\right|<\left|\left(u^0\right)\dct_{2 k_0+1}+\sqrt 2^{J-j-1}\cx_{2^{J-j-1}(2 k_0+1)}\right|
  \]
  and $\x\jjj=\uu^1$ otherwise. The required entry of $\uu^0$ can be computed from $\x\jj$ using $\mathcal{O}\left(m\jj\right)=\mathcal{O}(m)$ operations,
  \[
   \left(u^0\right)\dct_{2 k_0+1}=\sum_{l=0}^{2^{j+1}-1}\left(\Ct_{2^{j+1}}\right)_{2 k_0+1,\,l}u^0_l
   =\sum_{l=0}^{m\jj-1}\left(\Ct_{2^{j+1}}\right)_{2 k_0+1,\,\mu\jj+l}x\jj_{\mu\jj+l}.
  \]
  Thus we can find the first support index $\mu\jjj$ via
  \[
   \mu\jjj\coloneqq\begin{cases}
            \mu\jj &\text{if } \x\jjj=\uu^0, \\
            2^{j+1}-m\jj-\mu\jj & \text{if } \x\jjj=\uu^1.
           \end{cases}\qedhere
  \]
 \end{proof}
 \subsection{Case 2: Possible Collision}\label{sec:coll_bound}
 If $j=j'$, i.e., if $S\jj\subset I_{2^j-M,2^j-1}$, Lemma \ref{lem:facts} yields that $S\jjj\subset I_{2^j-M,2^j+M-1}$ and that nonzero entries of $\x\jjj$ might have been added to obtain $\x\jj$, so the values of the  nonzero entries of $\x\jj$ and $\x\jjj$ are not necessarily the same. The support of $\x\jj$ has length $m\jj\leq m\leq M$, so, by definition of the reflected periodization and Lemma \ref{lem:facts}, the support of $\x\jjj_{(0)}$ has at most length $\widetilde m\jj\coloneqq2^j-\mu\jj\leq M$. Note that $\widetilde m\jj\geq m\jj$ and that $\widetilde m\jj>m\jj$ is possible if there is no collision, i.e., if $2^j-1\notin S\jj$, see Figure \ref{fig:poss_coll}. Hence, it suffices to consider restrictions of $\x\jj$ and $\x\jjj_{(0)}$ to vectors of length $2^{\tilde K-1}$, where $2^{\tilde K-2}<\widetilde m\jj\leq 2^{\tilde K-1}$, taking into account all of their relevant entries.
 \def\a{2.4}
 \def\b{\a/4}
 \def\d{0.1}
 \def\dd{3/4*\d}
 \def\c{2/3}
 \begin{figure}[!ht]
 \tiny
 \begin{alignat*}{4}
 &\x^{(j)} \quad
 && \begin{tikzpicture}[baseline=-0.75ex]
	 \tikzset{
     brace/.style={decoration={brace},decorate},
     every pin edge/.style={thin}
  }
  \draw (0,0) -- (\a,0);
  \draw (0,5/4*\d) -- (0,-5/4*\d) node[below] {0}; 
  \draw (0.6*\a,\d) -- (0.6*\a,-\d); 
  \draw (\a/2,5/4*\d) -- (\a/2,-5/4*\d);
  \draw (\a,5/4*\d) -- (\a,-5/4*\d);
  \node (1end) at (\a,3/4*\d) {};
  \node (1start) at (\a-\c*\b,3/4*\d) {};
  \draw [brace] (1start.north) -- node [pos=0.5, above] {$m^{(j)}$} (1end.north); 
  \draw[blue, thick, pattern=north east lines, pattern color=blue] (\a,\dd) -- (\a,-\dd) -- (\a-\c*\b,-\dd) -- (\a-\c*\b,\dd) -- (\a,\dd); 
  \draw[orange, thick, pattern=north east lines, pattern color=orange] (\a,\dd) -- (\a,-\dd) -- (\a-\b+\c*\b,-\dd) -- (\a-\b+\c*\b,\dd) -- (\a,\dd); 
  \node [inner sep=3pt,pin={[inner sep=2pt, pin distance=0.15cm]255:$2^{j}-M$}] at (0.6*\a,-3/4*\d) {};
  \node [inner sep=3pt,pin={[inner sep=2pt, pin distance=0.15cm]270:$\mu\jj$}] at (\a-\c*\b,-3/4*\d) {};
  \node [inner sep=3pt,pin={[inner sep=2pt, pin distance=0.15cm]275:$2^{j}-1$}] at (\a,-3/4*\d) {};
 \end{tikzpicture} 
 \qquad
 &&\x^{(j)} \quad 
 && \begin{tikzpicture}[baseline=-0.75ex]
	 \tikzset{
     brace/.style={decoration={brace},decorate},
     every pin edge/.style={thin}
  }
  \draw (0,0) -- (\a,0);
  \draw (0,5/4*\d) -- (0,-5/4*\d) node[below] {0}; 
  \draw (0.6*\a,\d) -- (0.6*\a,-\d); 
  \draw (\a/2,5/4*\d) -- (\a/2,-5/4*\d);
  \draw (\a,5/4*\d) -- (\a,-5/4*\d);
  \node (1end) at (0.7*\a+\c*\b,3/4*\d) {};
  \node (1start) at (0.7*\a,3/4*\d) {};
  \draw [brace] (1start.north) -- node [pos=0.5, above] {$m^{(j)}$} (1end.north); 
  \draw[blue, thick, pattern=north east lines, pattern color=blue] (0.7*\a,\dd) -- (0.7*\a,-\dd) -- (0.7*\a+\c*\b,-\dd) -- (0.7*\a+\c*\b,\dd) -- (0.7*\a,\dd);
  \node [inner sep=3pt,pin={[inner sep=2pt, pin distance=0.15cm]255:$2^{j}-M$}] at (0.6*\a,-3/4*\d) {};
  \node [inner sep=3pt,pin={[inner sep=2pt, pin distance=0.15cm]270:$\mu\jj$}] at (0.7*\a,-3/4*\d) {};
  \node [inner sep=3pt,pin={[inner sep=2pt, pin distance=0.15cm]275:$2^{j}-1$}] at (\a,-3/4*\d) {};
 \end{tikzpicture} \\
  & \x^{(j+1)} \quad
 && \begin{tikzpicture}[baseline=-0.75ex]
	 \tikzset{
     brace/.style={decoration={brace},decorate},
     every pin edge/.style={thin}
  }
  \draw (0,0) -- (2*\a,0);
  \draw (0,5/4*\d) -- (0,-5/4*\d) node[below] {0};
  \draw (2*\a,5/4*\d) -- (2*\a,-5/4*\d) node[below] {$2^{j+1}-1$};
  \draw (\a/2,5/4*\d) -- (\a/2,-5/4*\d);
  \draw (0.6*\a,\d) -- (0.6*\a,-\d);
  \draw (\a,5/4*\d) -- (\a,-5/4*\d) node[below] {$2^{j}-1$};
  \draw (3*\a/2,5/4*\d) -- (3*\a/2,-5/4*\d);
  \draw (\a-\b*\c,\d) -- (\a-\b*\c,-\d);  
  \node (1end) at (\a,3/4*\d) {};
  \node (1start) at (\a-\c*\b,3/4*\d) {};
  \draw [brace] (1start.north) -- node [pos=0.5, above] {$\widetilde m^{(j)}$} (1end.north);
  \draw[orange, thick, pattern=north east lines, pattern color=orange] (\a-\b+\c*\b,3/4*\d) -- (\a-\b+\c*\b,-3/4*\d) -- (\a,-3/4*\d) -- (\a,3/4*\d) -- (\a-\b+\c*\b,3/4*\d);
  \draw[blue, thick, pattern=north west lines, pattern color=blue] (\a,-3/4*\d) -- (\a,3/4*\d) -- (\a+\c*\b,3/4*\d) -- (\a+\c*\b,-3/4*\d) -- (\a,-3/4*\d);
  \node [inner sep=3pt,pin={[inner sep=2pt, pin distance=0.15cm]255:$\mu\jj$}] at (\a-\b*\c,-3/4*\d) {};
  \node [inner sep=3pt,pin={[inner sep=2pt, pin distance=0.15cm]255:$2^{j}-M$}] at (0.6*\a,-3/4*\d) {};
 \end{tikzpicture}
 \qquad
  && \x^{(j+1)} \quad
  && \begin{tikzpicture}[baseline=-0.75ex]
	 \tikzset{
     brace/.style={decoration={brace},decorate},
     every pin edge/.style={thin}
  }
  \draw (0,0) -- (2*\a,0);
  \draw (0,5/4*\d) -- (0,-5/4*\d) node[below] {0};
  \draw (2*\a,5/4*\d) -- (2*\a,-5/4*\d) node[below] {$2^{j+1}-1$};
  \draw (\a/2,5/4*\d) -- (\a/2,-5/4*\d);
  \draw (0.6*\a,\d) -- (0.6*\a,-\d);
  \draw (\a,5/4*\d) -- (\a,-5/4*\d) node[below] {$2^{j}-1$};
  \draw (3*\a/2,5/4*\d) -- (3*\a/2,-5/4*\d);
  \node (1end) at (\a,3/4*\d) {};
  \node (1start) at (0.7*\a,3/4*\d) {};
  \draw [brace] (1start.north) -- node [pos=0.5, above] {$\widetilde m^{(j)}$} (1end.north);
  \draw[blue, thick, pattern=north west lines, pattern color=blue] (0.7*\a,-3/4*\d) -- (0.7*\a,3/4*\d) -- (0.7*\a+\c*\b,3/4*\d) -- (0.7*\a+\c*\b,-3/4*\d) -- (0.7*\a,-3/4*\d);
  \node [inner sep=3pt,pin={[inner sep=2pt, pin distance=0.15cm]270:$\mu\jj$}] at (0.7*\a,-3/4*\d) {};
  \node [inner sep=3pt,pin={[inner sep=2pt, pin distance=0.15cm]255:$2^{j}-M$}] at (0.6*\a,-3/4*\d) {};
 \end{tikzpicture}
  \end{alignat*}
 \caption{Illustration of the support of $\x^{(j)}$ and one possibility for the support of $\x\jjj$ for $m^{(j)}<m^{(j+1)}$ (left) and for $m^{(j)}=m^{(j+1)}$ (right), with $j=j'$.}
 \label{fig:poss_coll}
 \end{figure}
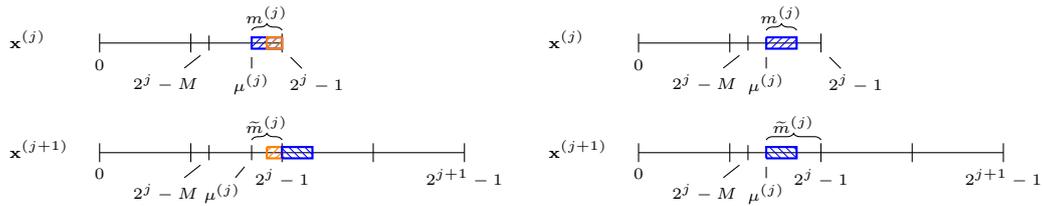
 \newline
 We then show that $\x\jjj$ can be calculated using essentially one DCT of length $2^{\tilde K-1}$ and further operations of complexity $\mathcal{O}\left(2^{\tilde K}\right)$. In order to do this we have to employ the vector $\x\jj$ known from the previous iteration step and $2^{\tilde K}$ suitably chosen oddly indexed entries of $\left(\x\jjj\right)\dct$, which can be found from $\cxx$ by Lemma \ref{lem:periodization}.
 
 The efficient computation of $\x\jjj$ is based on the following theorem.
 \begin{thm}\label{thm:coll}
  Let $\x\in\RR^{N}$ with $N=2^J$, $J\in\NN$, have a short support of length $m\leq M$ and assume that $\x$ satisfies $(\ref{eq:suppose})$. Let $j=j'$ and $\x^{(j)}$ be the $2^{j}$-length reflected periodization of $\x$ according to $(\ref{eq:periodization})$ and Lemma $\ref{lem:facts}$ with first support index $\mu\jj$ and support length $m\jj$. Assume that we have access to all entries of $\cxx$. Set $\widetilde m\jj\coloneqq 2^j-\mu\jj$, $\tilde K\coloneqq\left\lceil\log_2\widetilde m\jj\right\rceil+1$ and define the restrictions of $\x\jj$,  $\x\jjj_{(0)}$ and $\x\jjj_{(1)}$ to $2^{\tilde K-1}$-length vectors 
  \begin{align*}
   \z\jj&\coloneqq\left(x\jj_k\right)_{k=2^j-2^{\tilde K-1}}^{2^j-1} \\
   \z\jjj_{(0)}&\coloneqq\left(x\jjj_k\right)_{k=2^j-2^{\tilde K-1}}^{2^j-1}  \quad\text{and}\quad \z\jjj_{(1)}\coloneqq\left(x\jjj_k\right)_{k=2^j}^{2^j+2^{\tilde K-1}-1}.
  \end{align*}
  Then, using the vectors of samples $\bb^0\coloneqq\sqrt{2}^{J-j-1}\left(\cx_{2^{J-j-1}(2\cdot2^{j-\tilde K}(2p+1)+1)}\right)_{p=0}^{2^{\tilde K-1}-1}$ and $\bb^1\coloneqq\sqrt{2}^{J-j-1}\left(\cx_{2^{J-j-1}(2(2^{j-\tilde K}(2p+1)-1)+1)}\right)_{p=0}^{2^{\tilde K-1}-1}$, it holds that
  \begin{align*}
   \z\jjj_{(0)}&=\frac{1}{2}\left(\sqrt{2^{j-\tilde K}}(-1)^{2^{j-\tilde K}}\J_{2^{\tilde K-1}}\diag(\tilde\cc)\D_{2^{\tilde K-1}}\Cf_{2^{\tilde K-1}}\J_{2^{\tilde K-1}}\left(\bb^0-\bb^1\right)+\z\jj\right) \quad\text{and} \\
   \z\jjj_{(1)}&=\J_{2^{\tilde K-1}}\left(\z\jj-\z\jjj_{(0)}\right),
  \end{align*}
  where $\D_{2^{\tilde K-1}}\coloneqq\diag\left((-1)^k\right)_{k=0}^{2^{\tilde K-1}-1}$ and $\tilde\cc\coloneqq\left(\cos\left(\frac{(2k+1)\pi}{4\cdot 2^j}\right)^{-1}\right)_{k=0}^{2^{\tilde K-1}-1}$. The reflected periodization $\x\jjj$ is given as
  \[
   x\jjj_k=\begin{cases}
            \left(z\jjj_{(0)}\right)_{k-2^j+2^{\tilde K-1}} & \text{if } k\in\{2^j-2^{\tilde K-1},\dotsc,2^j-1\}, \\
            \left(z\jjj_{(1)}\right)_{k-2^j} & \text{if } k\in\{2^j,\dotsc,2^j+2^{\tilde K-1}-1\}, \\
            0 & \text{else.}
           \end{cases}
  \] 
 \end{thm}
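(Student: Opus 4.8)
I will present a plan for proving Theorem~\ref{thm:coll}.

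The plan is to reduce the computation of $\x\jjj$ from $\x\jj$ to the inversion of a single DCT-IV of length $2^{\tilde K-1}$. The crucial object is the auxiliary vector $\mathbf w\coloneqq 2\x\jjj_{(0)}-\x\jj$: by identity~(\ref{eq:even_odd}), which is a direct consequence of Lemma~\ref{lem:factorization} and the definition~(\ref{eq:periodization}) and hence holds without the restriction $j\neq j'$, the oddly indexed entries of $\left(\x\jjj\right)\dct$ form the vector $\tfrac1{\sqrt2}\,\Cf_{2^j}\mathbf w$, so that knowing $\mathbf w$ on its relevant coordinates (together with the known $\x\jj$) determines both halves of $\x\jjj$.

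First I would localize the supports. Since $j=j'$, Lemma~\ref{lem:facts}~\ref{item:facts_i} gives $S\jjj\subset I_{2^j-M,2^j+M-1}$, and Lemma~\ref{lem:facts} together with the non-cancellation~(\ref{eq:suppose}) forces the first support index of $\x\jj$ to be $\mu\jj=\min\{\mu\jjj,2^{j+1}-1-\nu\jjj\}$; consequently the supports of $\x\jjj_{(0)}$ and of $\J_{2^j}\x\jjj_{(1)}$ both start no earlier than $\mu\jj=2^j-\widetilde m\jj$ and hence lie in $I_{2^j-\widetilde m\jj,2^j-1}\subseteq I_{2^j-2^{\tilde K-1},2^j-1}$, while the support of $\x\jjj_{(1)}$ (as indices $2^j,\dots,2^{j+1}-1$) lies in $I_{2^j,2^j+2^{\tilde K-1}-1}$. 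This already yields the piecewise description of $\x\jjj$ (every entry outside $I_{2^j-2^{\tilde K-1},2^j+2^{\tilde K-1}-1}$ vanishes) and shows that $\mathbf w$ is faithfully represented by $2\z\jjj_{(0)}-\z\jj$. Restricting the reflected-periodization relation~(\ref{eq:periodization}) to the last $2^{\tilde K-1}$ coordinates gives $\z\jj=\z\jjj_{(0)}+\J_{2^{\tilde K-1}}\z\jjj_{(1)}$, hence $\z\jjj_{(1)}=\J_{2^{\tilde K-1}}\bigl(\z\jj-\z\jjj_{(0)}\bigr)$, so only $\z\jjj_{(0)}$ remains to be computed.

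For $\z\jjj_{(0)}$ I would start from $\left(x\jjj\right)\dct_{2k+1}=\tfrac1{\sqrt{2^j}}\sum_{l=0}^{2^j-1}\cos\bigl(\tfrac{(2k+1)(2l+1)\pi}{4\cdot 2^j}\bigr)w_l$ (from~(\ref{eq:even_odd})), keep only the $2^{\tilde K-1}$ possibly nonzero summands, substitute $l=2^j-1-l''$, and use $\cos\bigl(\tfrac{(2k+1)\pi}2-\theta\bigr)=(-1)^k\sin\theta$ to get
\[
 \left(x\jjj\right)\dct_{2k+1}=\frac{(-1)^k}{\sqrt{2^j}}\sum_{l''=0}^{2^{\tilde K-1}-1}\sin\!\left(\frac{(2k+1)(2l''+1)\pi}{2^{j+2}}\right)v_{l''},\qquad \mathbf v\coloneqq\J_{2^{\tilde K-1}}\bigl(2\z\jjj_{(0)}-\z\jj\bigr).
\]
Evaluating this at the two index families $k_p\coloneqq 2^{j-\tilde K}(2p+1)$ and $k'_p\coloneqq k_p-1$, which by Lemma~\ref{lem:periodization} yield precisely the sample vectors $\bb^0$ and $\bb^1$, the argument becomes $\tfrac{(2p+1)(2l''+1)\pi}{2^{\tilde K+1}}\pm\tfrac{(2l''+1)\pi}{2^{j+2}}$; since $(-1)^{k'_p}=-(-1)^{k_p}$ and $(-1)^{k_p}=(-1)^{2^{j-\tilde K}(2p+1)}=(-1)^{2^{j-\tilde K}}$, the identity $\sin(\alpha+\beta)+\sin(\alpha-\beta)=2\sin\alpha\cos\beta$ collapses the difference of the two families to
\[
 \bb^0-\bb^1=\frac{(-1)^{2^{j-\tilde K}}}{\sqrt{2^{j-\tilde K}}}\,\Sf_{2^{\tilde K-1}}\,\diag(\tilde\cc)^{-1}\,\mathbf v .
\]
Finally, using the elementary identity $\Sf_n=\J_n\Cf_n\D_n$ (which follows from $\cos\bigl(\tfrac{(2l+1)\pi}2-\vartheta\bigr)=(-1)^l\sin\vartheta$) and the fact that $\Cf_n$, $\J_n$ and $\D_n$ are involutory, I would solve for $\mathbf v$ and rearrange $2\z\jjj_{(0)}-\z\jj=\J_{2^{\tilde K-1}}\mathbf v$ to arrive at the stated formula for $\z\jjj_{(0)}$.

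I expect the main obstacle to be the step producing $\bb^0-\bb^1$: a single subsampled family of DCT-II coefficients does \emph{not} reduce to a length-$2^{\tilde K-1}$ cosine transform, because of the unavoidable additive shift $\pm\tfrac{(2l''+1)\pi}{2^{j+2}}$ in the trigonometric argument, and one has to recognise that exactly the \emph{difference} of the two suitably offset families $\bb^0,\bb^1$ produces the separable product $\sin\alpha\cos\beta$. The remaining delicate points are tracking the sign $(-1)^{2^{j-\tilde K}}$ and the normalization $\sqrt{2^{j-\tilde K}}$ through the reversal $l=2^j-1-l''$ and through the passage between $\Sf_{2^{\tilde K-1}}$ and $\Cf_{2^{\tilde K-1}}$.
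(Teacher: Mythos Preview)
Your plan is correct and follows the same overall architecture as the paper: localize the supports via Lemma~\ref{lem:facts}, reduce to the last $2^{\tilde K-1}$ coordinates, use the identity~(\ref{eq:even_odd}) to link the oddly indexed DCT-II entries of $\x\jjj$ with $\Cf_{2^j}\bigl(2\x\jjj_{(0)}-\x\jj\bigr)$, pass to sines via the reversal $l\mapsto 2^j-1-l$, and evaluate at the two offset index families $k_p$ and $k'_p$.

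The one noteworthy difference is in how you combine the two families. The paper expands each of $\bb^0$ and $\bb^1$ separately via the angle–addition formula into a sum $\Sf_{2^{\tilde K-1}}\diag(\cc)\pm\Cf_{2^{\tilde K-1}}\diag(\s)$, stacks them into a $2^{\tilde K}\times 2^{\tilde K}$ block system, inverts that system, and only at the end discards the half involving $\diag(\tilde\s)$ on numerical-stability grounds. You instead form the difference $\bb^0-\bb^1$ immediately and apply the product-to-sum identity $\sin(\alpha+\beta)+\sin(\alpha-\beta)=2\sin\alpha\cos\beta$, which lands directly on the ``good'' half $\Sf_{2^{\tilde K-1}}\diag(\tilde\cc)^{-1}\mathbf v$ without ever writing down the redundant sine block. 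This is a genuine streamlining: it avoids the detour through the full block inversion and makes transparent why only $\bb^0-\bb^1$ (and not $\bb^0+\bb^1$) is needed. The paper's longer route, on the other hand, makes more explicit why a \emph{single} subsampled family cannot suffice (the matrix in its equation~(\ref{eq:first_half}) is visibly non-square) and exhibits the alternative reconstruction via $\diag(\tilde\s)$ before rejecting it. Both arrive at the identical formula; your sign and normalization bookkeeping ($(-1)^{2^{j-\tilde K}}$, $\sqrt{2^{j-\tilde K}}$) and the inversion of $\Sf_{2^{\tilde K-1}}$ via $\Sf_n=\J_n\Cf_n\D_n$ are handled correctly.
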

 \begin{proof}
  If $j=j'$, it follows from Lemmas \ref{lem:support} and \ref{lem:facts} for the support set $S\jj$ of $\x\jj$ that 
  \[
   S\jj\subset I_{\mu\jj,2^j-1}\subset I_{2^j-M,2^j-1}.
  \]
  With $\widetilde m\jj=2^j-\mu\jj\leq M$ and $\tilde K=\left\lceil\log_2\widetilde m\jj\right\rceil+1$, we obtain
  \[
   S\jj\subset I_{2^j-\widetilde m\jj,2^j-1}\subset I_{2^j-2^{\tilde K-1},2^j-1},
  \]
  and, by definition of the reflected periodization, the support set $S\jjj$ of $\x\jjj$ satisfies
  \[
   S\jjj\subset I_{2^j-\widetilde m\jj,2^j+\widetilde m\jj-1}\subset I_{2^j-2^{\tilde K-1},2^j+2^{\tilde K-1}-1}.
  \]
  This allows us to reduce the number computations necessary to find $\x\jjj$. Note that since we only suppose that $x_{\mu^{(J)}}\neq0$, $x_{\nu^{(J)}}\neq0$ and $x_{\mu^{(J)}}+x_{\nu^{(J)}}\neq 0$ in (\ref{eq:suppose}), some of the last $\widetilde m\jj$ entries of $\x\jj$ might be zero, despite being obtained by adding two nonzero entries of $\x\jjj$. However, we know that either $\mu\jj=\mu\jjj$ or $\mu\jj=2^{j+1}-1-\nu\jj$ by case \ref{item:supp_iii} in the proof of Lemma \ref{lem:support}. Hence, if we restrict $\x\jj$ to its last $2^{\tilde K-1}\geq\widetilde m\jj=2^j-\mu\jj$ entries, i.e., to $\z\jj$, we take all of the at most $\widetilde m\jj$ entries into account which correspond to possibly nonzero entries of $\x\jjj$ by reflectedly periodizing, as the support of $\x\jjj$ has to be contained in $I_{2^j-\widetilde m\jj,2^j+\widetilde m\jj-1}$. Analogously, $\z\jjj_{(0)}$ and $\z\jjj_{(1)}$ take into account the at most $\widetilde m\jj$ nonzero entries of $\x\jjj_{(0)}$ and $\x\jjj_{(1)}$. Note that the restrictions still satisfy
  \begin{equation}\label{eq:per_z}
   \z\jj=\z\jjj_{(0)}+\J_{2^{\tilde K-1}}\z\jjj_{(1)}.
  \end{equation}
  Therefore, it is enough to derive a fast algorithm for computing $\z\jjj_{(0)}$, using $\z\jj$ and $2^{\widetilde K}$ entries of $\left(\x\jjj\right)\dct$. Recall that it follows from (\ref{eq:even_odd}) that
  \begin{equation}\label{eq:even_odd2}
   \left(\left(x\jjj\right)\dct_{2k+1}\right)_{k=0}^{2^j-1}=\frac{1}{\sqrt{2}}\Cf_{2^j}\left(2\x\jjj_{(0)}-\x\jj\right).
  \end{equation}
  We can restrict (\ref{eq:even_odd2}) to the vectors $\z\jj$ and $\z\jjj_{(0)}$, which yields
  \begin{align}
   &\left(\left(x\jjj\right)\dct_{2k+1}\right)_{k=0}^{2^j-1} \notag \\
   =&\frac{1}{\sqrt{2^j}}\left(\cos\left(\frac{(2k+1)(2l'+1)\pi}{4\cdot 2^j}\right)\right)_{k=0,\,l'=2^j-2^{\tilde K-1}}^{2^j-1}\left(2\z\jjj_{(0)}-\z\jj\right) \notag \\
   =&\frac{1}{\sqrt{2^j}}\left(\cos\left(\frac{(2k+1)(2^{j+1}-(2l+1))\pi}{4\cdot 2^j}\right)\right)_{k,\,l=0}^{2^j-1,\,2^{\tilde K-1}-1} \J_{2^{\tilde K-1}}\left(2\z\jjj_{(0)}-\z\jj\right) \notag \\
   =&\frac{1}{\sqrt{2^j}}\left(\cos\left(\frac{(2k+1)2^{j+1}\pi}{4\cdot 2^j}\right)\cos\left(\frac{(2k+1)(2l+1)\pi}{4\cdot2^j}\right)\right. \notag \\
   &+\left.\sin\left(\frac{(2k+1)2^{j+1}\pi}{4\cdot2^j}\right)\sin\left(\frac{(2k+1)(2l+1)\pi}{4\cdot2^j}\right)\right)_{k,\,l=0}^{2^j-1,\,2^{\tilde K-1}-1} \J_{2^{\tilde K-1}} \left(2\z\jjj_{(0)}-\z\jj\right) \notag \\
   =&\frac{1}{\sqrt{2^j}}\left((-1)^k\sin\left(\frac{(2k+1)(2l+1)\pi}{4\cdot 2^j}\right)\right)_{k,\,l=0}^{2^j-1,\,2^{\tilde K-1}-1}\J_{2^{\tilde K-1}}\left(2\z\jjj_{(0)}-\z\jj\right), \label{eq:restrict_this}
  \end{align}
  where $l\coloneqq2^j-1-l'$. As $\z\jj$ and $\z_{(0)}\jjj$ have length $2^{\tilde K-1}$, it suffices to consider the $2^{\tilde K-1}$ equations corresponding to the indices $2k_p+1$, where $k_p\coloneqq2^{j-\tilde K}(2p+1)$, $p\in\left\{0,\dotsc,2^{\tilde K-1}-1\right\}$. We obtain
  \begin{align}
   &\sqrt{2^j}(-1)^{2^{j-\tilde K}}\left(\left(x\jjj\right)\dct_{2k_p+1}\right)_{p=0}^{2^{\tilde K-1}-1} \notag \\
   =&\left(\sin\left(\frac{\left(2^{j-\tilde K+1}(2p+1)+1\right)(2l+1)\pi}{4\cdot2^j}\right)\right)_{p,\,l=0}^{2^{\tilde K-1}-1} \J_{2^{\tilde K-1}}\left(2\z\jjj_{(0)}-\z\jj\right) \notag \\
   =&\left(\sin\left(\frac{(2p+1)(2l+1)\pi}{4\cdot2^{\tilde K-1}}\right)\cos\left(\frac{(2l+1)\pi}{4\cdot2^j}\right)\right. \notag \\
   &+\left.\cos\left(\frac{(2p+1)(2l+1)\pi}{4\cdot2^{\tilde K-1}}\right)\sin\left(\frac{(2l+1)\pi}{4\cdot2^j}\right)\right)_{p,\,l=0}^{2^{\tilde K-1}-1}\J_{2^{\tilde K-1}}\left(2\z\jjj_{(0)}-\z\jj\right).\label{eq:some_step}
  \end{align}
  Defining the vectors
  \[
   \cc\coloneqq\left(\cos\left(\frac{(2l+1)\pi}{4\cdot2^j}\right)\right)_{l=0}^{2^{\tilde K-1}-1} \quad\text{and}\quad 
   \s\coloneqq\left(\sin\left(\frac{(2l+1)\pi}{4\cdot2^j}\right)\right)_{l=0}^{2^{\tilde K-1}-1},
  \]
  (\ref{eq:some_step}) can be written as
  \begin{align}
   &\sqrt{2^{j-\tilde K+2}}(-1)^{2^{j-\tilde K}}\left(\left(x\jjj\right)\dct_{2k_p+1}\right)_{p=0}^{2^{\tilde K-1}-1} \notag \\
   =&\left(\Sf_{2^{\tilde K-1}}\cdot\diag(\cc)+\Cf_{2^{\tilde K-1}}\cdot\diag(\s)\right) \J_{2^{\tilde K-1}}\left(2\z\jjj_{(0)}-\z\jj\right) \notag \\
   =&\left(\Cf_{2^{\tilde K-1}}\diag(\s)+\J_{2^{\tilde K-1}}\Cf_{2^{\tilde K-1}}\D_{2^{\tilde K-1}}\diag(\cc)\right) \J_{2^{\tilde K-1}}\left(2\z\jjj_{(0)}-\z\jj\right) \notag \\
   =&\left(\begin{array}{c|c}
                                                   \Cf_{2^{\tilde K-1}} & \J_{2^{\tilde K-1}}\Cf_{2^{\tilde K-1}}
                                                  \end{array}\right)
                                                  \begin{pmatrix}
                                                   \diag(\s) & \\
                                                   & \D_{2^{\tilde K-1}}\diag(\cc)
                                                  \end{pmatrix} 
                                                  \begin{pmatrix}
                                                   \J_{2^{\tilde K-1}}\left(2\z\jjj_{(0)}-\z\jj\right) \\
                                                   \J_{2^{\tilde K-1}}\left(2\z\jjj_{(0)}-\z\jj\right)
                                                  \end{pmatrix} \label{eq:first_half}
  \end{align}
  where we used a connection between the sine and cosine matrices of type IV (see \cite{wang}),
  \[
   \Sf_n=\J_n\Cf_n\D_n \quad\text{with}\quad \D_n=\diag\left((-1)^k\right)_{k=0}^{n-1} \quad \forall\, n\in\NN.
  \]
  As the first matrix in (\ref{eq:first_half}) is not a square matrix, we consider $2^{\tilde K-1}$ additional equations from (\ref{eq:restrict_this}). Now we choose the equations corresponding to the indices $2k'_p+1$, where $k_p'\coloneqq2^{j-\tilde K}(2p+1)-1$, $p\in\left\{0,\dotsc,2^{\tilde K-1}-1\right\}$.  Then we find that
  \begin{align}
   &\sqrt{2^{j-\tilde K+2}}\left(\left(x\jjj\right)\dct_{2k'_p+1}\right)_{p=0}^{2^{\tilde K-1}-1} \notag \\
   =&\frac{(-1)^{k'_p}}{\sqrt{2^{\tilde K-2}}}\left(\sin\left(\frac{\left(2^{j-\tilde K+1}(2p+1)-1\right)(2l+1)\pi}{4\cdot2^j}\right)\right)_{p,\,l=0}^{2^{\tilde K-1}-1} \J_{2^{\tilde K-1}}\left(2\z\jjj_{(0)}-\z\jj\right) \notag \\
   =&\frac{(-1)^{2^{j-\tilde K}-1}}{\sqrt{2^{\tilde K-2}}}\left(\sin\left(\frac{(2p+1)(2l+1)\pi}{4\cdot2^{\tilde K-1}}\right)\cos\left(\frac{(2l+1)\pi}{4\cdot2^j}\right)\right. \notag \\
   &-\left.\cos\left(\frac{(2p+1)(2l+1)\pi}{4\cdot2^{\tilde K-1}}\right)\sin\left(\frac{(2l+1)\pi}{4\cdot2^j}\right)\right)_{p,\,l=0}^{2^{\tilde K-1}-1}\J_{2^{\tilde K-1}}\left(2\z\jjj_{(0)}-\z\jj\right) \notag \\
   =&\frac{(-1)^{2^{j-\tilde K}-1}}{\sqrt{2^{\tilde K-2}}}\sqrt{2^{\tilde K-2}}\left(\Sf_{2^{\tilde K-1}}\cdot\diag(\cc)-\Cf_{2^{\tilde K-1}}\cdot\diag(\s)\right)\J_{2^{\tilde K-1}}\left(2\z\jjj_{(0)}-\z\jj\right) \notag \\
   =&(-1)^{2^{j-\tilde K}}\left(\begin{array}{c|c}
                                                   \Cf_{2^{\tilde K-1}} & -\J_{2^{\tilde K-1}}\Cf_{2^{\tilde K-1}}
                                                  \end{array}\right)
                                                  \begin{pmatrix}
                                                   \diag(\s) & \\
                                                   & \D_{2^{\tilde K-1}}\diag(\cc)
                                                  \end{pmatrix} \notag \\
                                                  &\cdot\begin{pmatrix}
                                                   \J_{2^{\tilde K-1}}\left(2\z\jjj_{(0)}-\z\jj\right) \\
                                                   \J_{2^{\tilde K-1}}\left(2\z\jjj_{(0)}-\z\jj\right)
                                                  \end{pmatrix} \label{eq:second_half}
  \end{align}
  Using Lemma \ref{lem:periodization} we denote by 
  \begin{align*}
    &\bb^0\coloneqq\left(\left(x\jjj\right)\dct_{2k_p+1}\right)_{p=0}^{2^{\tilde K-1}-1} =\sqrt{2}^{J-j-1}\left(\cx_{2^{J-j-1}(2k_p+1)}\right)_{p=0}^{2^{\tilde K-1}-1}\in\RR^{2^{\tilde K-1}}
   \quad\text{and} \\
    &\bb^1\coloneqq\left(\left(x\jjj\right)\dct_{2k'_p+1}\right)_{p=0}^{2^{\tilde K-1}-1} =\sqrt{2}^{J-j-1}\left(\cx_{2^{J-j-1}(2k'_p+1)}\right)_{p=0}^{2^{\tilde K-1}-1}\in\RR^{2^{\tilde K-1}}
  \end{align*}
  the vectors of required entries of $\cxx$. Combining (\ref{eq:first_half}) and (\ref{eq:second_half}) yields
  \begin{align}
   &\sqrt{2^{j-\tilde K+2}}(-1)^{2^{j-\tilde K}}\begin{pmatrix}
    \bb^0 \\
    \bb^1 
   \end{pmatrix} \notag \\
   =& \begin{pmatrix}
       \Cf_{2^{\tilde K-1}} & \J_{2^{\tilde K-1}}\Cf_{2^{\tilde K-1}} \\
       \Cf_{2^{\tilde K-1}} & -\J_{2^{\tilde K-1}}\Cf_{2^{\tilde K-1}}
      \end{pmatrix}
      \begin{pmatrix}
       \diag(\s) & \\
       & \D_{2^{\tilde K-1}}\diag(\cc)
      \end{pmatrix} 
      \begin{pmatrix}
       \J_{2^{\tilde K-1}}\left(2\z\jjj_{(0)}-\z\jj\right) \\
       \J_{2^{\tilde K-1}}\left(2\z\jjj_{(0)}-\z\jj\right)
      \end{pmatrix} \notag \\
   = &\begin{pmatrix}
      \I_{2^{\tilde K-1}} & \J_{2^{\tilde K-1}} \\
      \I_{2^{\tilde K-1}} & -\J_{2^{\tilde K-1}}
     \end{pmatrix}
     \begin{pmatrix}
      \Cf_{2^{\tilde K-1}} & \\
      & \Cf_{2^{\tilde K-1}}
     \end{pmatrix}
     \begin{pmatrix}
      \diag(\s) & \\
      & \D_{2^{\tilde K-1}}\diag(\cc)
     \end{pmatrix} \notag \\
     &\cdot\begin{pmatrix}
      \J_{2^{\tilde K-1}}\left(2\z\jjj_{(0)}-\z\jj\right) \\
      \J_{2^{\tilde K-1}}\left(2\z\jjj_{(0)}-\z\jj\right)
     \end{pmatrix}. \label{eq:all}
  \end{align}
  Note that the first matrix in (\ref{eq:all}) is invertible, as
  \[
   \begin{pmatrix}
      \I_{2^{\tilde K-1}} & \J_{2^{\tilde K-1}} \\
      \I_{2^{\tilde K-1}} & -\J_{2^{\tilde K-1}}
     \end{pmatrix} \cdot\frac{1}{2}
   \begin{pmatrix}
      \I_{2^{\tilde K-1}} & \I_{2^{\tilde K-1}} \\
      \J_{2^{\tilde K-1}} & -\J_{2^{\tilde K-1}}
     \end{pmatrix}
   =\begin{pmatrix}
     \I_{2^{\tilde K-1}} & \\
     & \I_{2^{\tilde K-1}}
    \end{pmatrix}.
  \]
  Furthermore, since $\widetilde m\jj\leq M$ and thus $\tilde K\leq L\leq j$,
  \[
   \frac{(2l+1)\pi}{4\cdot2^j}\in\left(0,\frac{\pi}{4}\right)
  \]
  for all $l\in\left\{0,\dotsc,2^{\tilde K-1}-1\right\}$. Consequently, we have that
  \[
   \cos\left(\frac{(2l+1)\pi}{4\cdot2^j}\right)\in\left(\frac{1}{\sqrt{2}},1\right) \qquad\text{and}\qquad 
   \sin\left(\frac{(2l+1)\pi}{4\cdot2^j}\right)\in\left(0,\frac{1}{\sqrt{2}}\right),
  \]
  which means that the third matrix in (\ref{eq:all}) is invertible as well, since the multiplication of the second half of the odd diagonal entries with $-1$, caused by $\D_{2^{\tilde K-1}}$, does not change the absolute value of the determinant of the matrix. Thus all matrices in (\ref{eq:all}) are invertible and it follows that
  \begin{align}
   &\begin{pmatrix}
    \J_{2^{\tilde K-1}}\left(2\z\jjj_{(0)}-\z\jj\right) \\
    \J_{2^{\tilde K-1}}\left(2\z\jjj_{(0)}-\z\jj\right)
   \end{pmatrix} \notag \\
   =&\sqrt{2^{j-\tilde K}}(-1)^{2^{j-\tilde K}}\begin{pmatrix}
                                   \diag(\tilde\s) & \\
                                   & \diag(\tilde\cc)\D_{2^{\tilde K-1}}
                                  \end{pmatrix}
                                  \begin{pmatrix}
                                   \Cf_{2^{\tilde K-1}} & \\
                                   & \Cf_{2^{\tilde K-1}}
                                  \end{pmatrix} \notag \\
                                  &\cdot\begin{pmatrix}
                                   \I_{2^{\tilde K-1}} & \I_{2^{\tilde K-1}} \\
                                   \J_{2^{\tilde K-1}} & -\J_{2^{\tilde K-1}}
                                  \end{pmatrix}
                                  \begin{pmatrix}
                                   \bb^0 \\
                                   \bb^1
                                  \end{pmatrix} \notag \\
   =&\sqrt{2^{j-\tilde K}}(-1)^{2^{j-\tilde K}}\begin{pmatrix}
                                  \diag(\tilde\s)\Cf_{2^{\tilde K-1}} & \\
                                  & \diag(\tilde\cc)\D_{2^{\tilde K-1}}\Cf_{2^{\tilde K-1}} 
                                 \end{pmatrix}
                                 \begin{pmatrix}
                                  \bb^0+\bb^1 \\
                                  \J_{2^{\tilde K-1}}\left(\bb^0-\bb^1\right)
                                 \end{pmatrix} \label{eq:almost}
  \end{align}
  where
  \[
   \tilde\cc\coloneqq\left(\cos\left(\frac{(2l+1)\pi}{4\cdot2^j}\right)^{-1}\right)_{l=0}^{2^{\tilde K-1}-1} \qquad\text{and}\qquad 
   \tilde\s\coloneqq\left(\sin\left(\frac{(2l+1)\pi}{4\cdot2^j}\right)^{-1}\right)_{l=0}^{2^{\tilde K-1}-1}.
  \]
  Using only the second $2^{\tilde K-1}$ equations in (\ref{eq:almost}) we obtain
  \[
   \z\jjj_{(0)}=\frac{1}{2}\left(\sqrt{2^{j-\tilde K}}(-1)^{2^{j-\tilde K}}\J_{2^{\tilde K-1}}\diag(\tilde\cc)\D_{2^{\tilde K-1}}\Cf_{2^{\tilde K-1}}\J_{2^{\tilde K-1}}\left(\bb^0-\bb^1\right)+\z\jj\right),
  \]
  which implies that $\z\jjj_{(0)}$ can be computed in $\mathcal{O}\left(2^{\tilde K-1}\log2^{\tilde K-1}\right)$ operations using $2^{\tilde K}$ entries of $\cxx$, as $\D_{2^{\tilde K-1}}$ is a diagonal matrix and $\J_{2^{\tilde K-1}}$ is a permutation. Then $\z\jjj_{(1)}$ can be found in $\mathcal{O}\left(2^{\tilde K-1}\right)$ time by (\ref{eq:per_z}) and $\x\jjj$ is given as
  \[
   x\jjj_k=\begin{cases}
            \left(z\jjj_{(0)}\right)_{k-2^j+2^{\tilde K-1}} & \text{if } k\in\left\{2^j-2^{\tilde K-1},\dotsc,2^j-1\right\}, \\
            \left(z\jjj_{(1)}\right)_{k-2^j} & \text{if } k\in\left\{2^j,\dotsc,2^j+2^{\tilde K-1}-1\right\}, \\
            0 & \text{else,}
           \end{cases}
  \]
  since all possibly nonzero entries of $\x\jjj$ are determined by $\z\jjj_{(0)}$ and $\z\jjj_{(1)}$.
 \end{proof}
 Note that by choosing the second $2^{\tilde K-1}$ equations in (\ref{eq:almost}) we avoid inverting $\diag(\s)$, which would be numerically less stable, since for large $\tilde K$ its nonzero entries are rather close to zero, whereas all nonzero entries of $\diag(\cc)$ are greater than $\frac{1}{\sqrt{2}}$.

\section{The Sparse DCT Algorithms}\label{sec:algs}
In Section \ref{sec:procedures} we introduced all procedures necessary for the new sparse DCT for vectors $\x\in\RR^{2^J}$ with short support of length $m\leq M$ that satisfy (\ref{eq:suppose}).
\subsection{The Sparse DCT for Bounded Short Support Length}\label{sec:alg_bound}
We suppose that $N=2^J$ and $\x\in\RR^N$ has a short support of unknown length $m$, but that a bound $M\geq m$ is known. Further, we assume that (\ref{eq:suppose}) holds for $\x$ and that we can access all entries of $\cxx\in\RR^N$. The algorithm begins by computing the initial vector 
\[
 \x^{(L)}=\mathbf{C}^{\mathrm{III}}_{2^L}\left(\sqrt{2}^{J-L}\left(\cx_{2^{J-L}k}\right)_{k=0}^{2^L-1}\right),
\]
where $L\coloneqq\left\lceil\log_2M\right\rceil+1$, using a fast DCT-III algorithm for vectors with full support, see, e.g., \cite{plonka_dct,wang}, since DCT-III is the same as IDCT-II. For $j\in\{L,\dotsc,J-1\}$ we perform the following iteration steps.
\begin{enumerate}[label=\arabic*),ref=\arabic*]
 \item If the support of $\x\jj$ is not contained in $I_{2^j-M,2^j-1}$, recover $\x\jjj$ using the DCT procedure given in Theorem \ref{thm:nocoll}.
 \item \label{item:step_coll} If the support of $\x\jj$ is contained in $I_{2^j-M,2^j-1}$, recover $\x\jjj$ using the DCT procedure given in Theorem \ref{thm:coll}.
\end{enumerate}
It follows from Lemma \ref{lem:facts} that there is at most one index $j'$ s.t.\ $S^{\left(j'\right)}\subset I_{2^{j'}-M,2^{j'}-1}$. Hence, we have to apply step \ref{item:step_coll} at most once. The complete procedure is summarized in Algorithm \ref{alg:dct_bound}.
\begin{algorithm}
\caption{Sparse Fast DCT for Vectors with Bounded Short Support Length}
\label{alg:dct_bound}
\begin{algorithmic}[1]
\small
\renewcommand{\algorithmicrequire}{\textbf{Input:}}
\renewcommand{\algorithmicensure}{\textbf{Output:}}
\Require $\cxx$, where $\x\in\RR^N$, $N=2^J$, $J\in\NN$, has an unknown short support of length at most $M$ and satisfies (\ref{eq:suppose}), $M$ and noise threshold $\eps>0$.
\State $L\gets\lceil\log_2M\rceil+1$ and $\x^{(L)}\gets\mathbf{DCT\text{-}III}\left[\sqrt{2}^{J-L}\left(\cx_{2^{J-L}k}\right)_{k=0}^{2^L-1}\right]$ \label{line:initialize}
\State Find $\mu^{(L)}$ and $m^{(L)}$. \label{line:initial_supp}
\For{$j$ from $L$ to $J-1$} 
\If{$\mu\jj<2^j-M$}
\State Find $\alpha=\sqrt{2}^{J-j-1}\cx_{2^{J-j-1}(2k_0+1)}\neq0$. \label{line:nonzero}
\State $\left(u^0\right)\dct_{2k_0+1}\gets\frac{1}{\sqrt{2^j}}\sum\limits_{l=0}^{m\jj-1}\cos\left(\frac{(2k_0+1)\left(2\left(\mu\jj+l\right)+1\right)\pi}{2\cdot2^{j+1}}\right)x\jj_{\mu\jj+l}$ \label{line:nocoll_start}
\State $\nu_t\gets\begin{cases}
                   0 & \text{if } \left|\left(u^0\right)\dct_{2k_0+1}-\alpha\right| 
                       <\left|\left(u^0\right)\dct_{2k_0+1}+\alpha\right|, \\
                   1 & \text{else}
                  \end{cases}$
\If{$\nu_t=0$}
\State $\mu\jjj\gets\mu\jj$ and $m\jjj\gets m\jj$ \label{line:mu_m_0}
\State $x\jjj_k\gets\begin{cases} 
                     x\jj_k & \text{if } k\in\left\{\mu\jjj,\dotsc,\mu\jjj+m\jjj-1\right\} \\
                     0 & \text{else}
                    \end{cases}$ 
\Else 
\State $\mu\jjj\gets 2^{j+1}-m\jj-\mu\jj$ and $m\jjj\gets m\jj$ \label{line:mu_m_1}
\State $x\jjj_k\gets\begin{cases}
                     x\jj_{2^{j+1}-1-k} & \text{if } k\in\left\{\mu\jjj,\dotsc,\mu\jjj+m\jjj-1\right\} \\
                     0 & \text{else}
                    \end{cases}$ \label{line:nocoll_end}
\EndIf
\Else 
\State $\tilde K\gets \left\lceil\log_2\left(2^j-\mu\jj\right)\right\rceil+1$
\State $\z\jj\gets\left(x\jj_{2^j-2^{\tilde K-1}+k}\right)_{k=0}^{2^{\tilde K-1}-1}$ \label{line:coll_start}
\State $\bb^0\gets\sqrt{2}^{J-j-1}\left(\cx_{2^{J-\tilde K}(2p+1)+2^{J-j-1}}\right)_{p=0}^{2^{\tilde K-1}-1}$ \label{line:samples_nocoll_1}
\State $\bb^1\gets\sqrt{2}^{J-j-1}\left(\cx_{2^{J-\tilde K}(2p+1)-2^{J-j-1}}\right)_{p=0}^{2^{\tilde K-1}-1}$ \label{line:samples_nocoll_2}
\State \begin{varwidth}[t]{\linewidth} $\z\jjj_{(0)}\gets\frac{1}{2}\sqrt{2^{j-\tilde K}}(-1)^{2^{j-\tilde K}}\J_{2^{\tilde K-1}}\diag(\tilde\cc)\D_{2^{\tilde K-1}}\mathbf{DCT\text{-}IV}\left[\J_{2^{\tilde K-1}}\left(\bb^0-\bb^1\right)\right]$ \\
\hskip\algorithmicindent\phantom{$\z\jjj_{(0)}\gets$} $+\frac{1}{2}\z\jj$ \label{line:z_0} \end{varwidth}
\State $\left(z\jjj_{(0)}\right)_k\gets\begin{cases}
                     \left(z\jjj_{(0)}\right)_k & \text{if } \left(z\jjj_{(0)}\right)_k>\eps, \\
                     0 & \text{else,}
                    \end{cases} \qquad k\in\left\{0,\dotsc,2^{\tilde K-1}-1\right\}$ \label{line:z_0_2}
\State $\z\jjj_{(1)}\gets\J_{2^{\tilde K-1}}\left(\z\jj-\z\jjj_{(0)}\right)$ \label{line:z_1}
\State $x\jjj_k\gets\begin{cases}
                     \left(z\jjj_{(0)}\right)_{k-2^j+2^{\tilde K-1}} & \text{if } k\in\left\{2^j-2^{\tilde K-1},\dotsc,2^j-1\right\} \\
                     \left(z\jjj_{(1)}\right)_{k-2^j} & \text{if } k\in\left\{2^j,\dotsc,2^j+2^{\tilde K-1}-1\right\} \\
                     0 & \text{else}
                    \end{cases}$ \label{line:x_coll}
\State Find $\mu\jjj$ and $m\jjj$. \label{line:find_supp}
\EndIf
\EndFor
\Ensure $\x=\x^{(J)}$
\end{algorithmic}
\end{algorithm}

\begin{rem}
 For finding the first support index $\mu^{(L)}$ and the support length $m^{(L)}$ in line \ref{line:initial_supp}, as well as $\mu\jjj$ and $m\jjj$ in line \ref{line:find_supp} efficiently, we choose a threshold $\eps>0$ depending on the noise level of the data. If we want to determine the support of $\x^{(L)}$, we define the set
 \[
  T^{(L)}\coloneqq\left\{k\in I_{0,2^L-1}:x^{(L)}_k>\eps\right\}\eqqcolon\left\{u_1,\dotsc,u_P\right\}
 \]
 of indices corresponding to significantly large entries of $\x^{(L)}$. This set can be found in $\mathcal{O}\left(2^L\right)=\mathcal{O}\left(M\right)$ time, and we set
 \[
  \mu^{(L)}\coloneqq u_1 \quad\text{and}\quad m^{(L)}\coloneqq u_P-u_1+1.
 \]
 For $j\in\{L,\dotsc,J-1\}\backslash\{j'\}$, i.e., if $\x\jjj$ is computed with the DCT proedure given in Theorem \ref{thm:nocoll}, $\mu\jjj$ and $m\jjj$ are computed in line \ref{line:mu_m_0} or \ref{line:mu_m_1}. In order to find the support of $\x\jjj$ for $j=j'\in\{L,\dotsc,J-1\}$, i.e., if $\x\jjj$ is obtained by the DCT procedure given in Theorem \ref{thm:coll}, it suffices to consider the set 
 \[
  T\jjj\coloneqq\left\{k\in\left\{2^j-2^{\tilde K-1},\dotsc,2^j+2^{\tilde K-1}-1\right\}:x\jjj_k>\eps\right\}\eqqcolon\left\{v_1,\dotsc,v_Q\right\},
 \]
 where $Q\leq\widetilde m\jj\leq M$. Then $T\jjj$ can be found in $\mathcal{O}\left(2^{\tilde K}\right)=\mathcal{O}\left(\widetilde m\jj\right)=\mathcal{O}(M)$ time as well, and we define
 \[
  \mu\jjj\coloneqq v_1 \quad\text{and}\quad m\jjj\coloneqq v_Q-v_1+1.
 \]
\end{rem}
Having presented our new algorithm we now prove that its runtime and sampling complexity are sublinear in the vector length $N$.
\begin{thm}\label{thm:runtime_bound}
 Let $\x\in\RR^N$, $N=2^J$, $J\in\NN$, have a short support of length $m$ and assume that $\x$ satisfies $(\ref{eq:suppose})$. Further suppose that only an upper bound $M\geq m$ is known. Then Algorithm $\ref{alg:dct_bound}$ has a runtime of $\mathcal{O}\left(M\log M+m\log_2\frac{N}{M}\right)$ and uses $\mathcal{O}\left(M+m\log_2\frac{N}{M}\right)$ samples of $\cxx$.
\end{thm}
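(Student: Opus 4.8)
The plan is to prove this purely by accounting: the total cost of Algorithm~\ref{alg:dct_bound} splits into the cost of the initialization in line~\ref{line:initialize}, the accumulated cost of the no-collision iterations (the branch using Theorem~\ref{thm:nocoll}), and the cost of the at-most-one collision step~\ref{item:step_coll} (the branch using Theorem~\ref{thm:coll}). First I would record the elementary power-of-two bookkeeping. Since $L=\lceil\log_2 M\rceil+1$, the number $2^{L-1}=2^{\lceil\log_2 M\rceil}$ is the smallest power of two $\geq M$, so $M\leq 2^{L-1}<2M$ and hence $2^L=\mathcal{O}(M)$; consequently the DCT-III of length $2^L$ computed in line~\ref{line:initialize} by a fast full-support algorithm (see \cite{plonka_dct,wang}) costs $\mathcal{O}(2^L\log 2^L)=\mathcal{O}(M\log M)$ operations and reads the $2^L=\mathcal{O}(M)$ entries $\cx_{2^{J-L}k}$ of $\cxx$, while determining $\mu^{(L)},m^{(L)}$ in line~\ref{line:initial_supp} adds $\mathcal{O}(2^L)=\mathcal{O}(M)$. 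The same bookkeeping gives $J-L\leq\log_2 N-\log_2 M-1\leq\log_2\frac{N}{M}$ for the number of loop passes (and if $M\geq N/2$ the loop is empty, so the claimed $\mathcal{O}(N\log N)$ bound is already furnished by the initialization alone).

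For the no-collision iterations, I would fix $j\in\{L,\dots,J-1\}$ with $\mu\jj<2^j-M$, i.e.\ $S\jj\not\subset I_{2^j-M,2^j-1}$, which by Lemma~\ref{lem:facts} is precisely the case $j\neq j'$. By Lemma~\ref{lem:support} the reflected periodization $\x\jj$ has short support of length $m\jj\leq m$, and Theorem~\ref{thm:nocoll} guarantees that lines~\ref{line:nonzero}--\ref{line:nocoll_end} recover $\x\jjj$ in $\mathcal{O}(m\jj)=\mathcal{O}(m)$ operations and $\mathcal{O}(m)$ samples: locating $k_0$ and $\alpha$ in line~\ref{line:nonzero} inspects the $m\jj$ values $\sqrt{2}^{J-j-1}\cx_{2^{J-j-1}(2k+1)}$, $k=0,\dots,m\jj-1$; evaluating $(u^0)\dct_{2k_0+1}$ in line~\ref{line:nocoll_start} is a sum of $m\jj$ terms involving only the already-known vector $\x\jj$; and writing down $\mu\jjj$, $m\jjj$ and the at most $m\jj$ entries of $\x\jjj$ is $\mathcal{O}(m\jj)$. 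Since there are at most $J-L$ such passes, they contribute $\mathcal{O}\bigl(m(J-L)\bigr)=\mathcal{O}\bigl(m\log_2\frac{N}{M}\bigr)$ operations and the same order of samples.

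For the collision step I would invoke Lemma~\ref{lem:facts}\,(\ref{item:facts_i}): there is at most one index $j'$ with $S^{(j')}\subset I_{2^{j'}-M,2^{j'}-1}$, so step~\ref{item:step_coll} is executed at most once. When it is, Theorem~\ref{thm:coll} applies with $\widetilde m^{(j')}=2^{j'}-\mu^{(j')}\leq M$ and $\tilde K=\lceil\log_2\widetilde m^{(j')}\rceil+1$, and the same power-of-two argument yields $\widetilde m^{(j')}\leq 2^{\tilde K-1}<2M$, i.e.\ $2^{\tilde K}=\mathcal{O}(M)$. Hence lines~\ref{line:coll_start}--\ref{line:x_coll} amount to one DCT-IV of length $2^{\tilde K-1}$, i.e.\ $\mathcal{O}(2^{\tilde K-1}\log 2^{\tilde K-1})=\mathcal{O}(M\log M)$ operations, plus the diagonal rescalings by $\D_{2^{\tilde K-1}},\diag(\tilde\cc)$ and the permutations $\J_{2^{\tilde K-1}}$, which are $\mathcal{O}(2^{\tilde K-1})$, plus the support search in line~\ref{line:find_supp} over the $2^{\tilde K}=\mathcal{O}(M)$ candidate indices; the samples read are the $2^{\tilde K}=\mathcal{O}(M)$ entries forming $\bb^0,\bb^1$ in lines~\ref{line:samples_nocoll_1}--\ref{line:samples_nocoll_2}. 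Adding the three contributions gives
\[
 \mathcal{O}(M\log M)+\mathcal{O}\!\left(m\log_2\tfrac{N}{M}\right)+\mathcal{O}(M\log M)=\mathcal{O}\!\left(M\log M+m\log_2\tfrac{N}{M}\right)
\]
operations and
\[
 \mathcal{O}(M)+\mathcal{O}\!\left(m\log_2\tfrac{N}{M}\right)+\mathcal{O}(M)=\mathcal{O}\!\left(M+m\log_2\tfrac{N}{M}\right)
\]
samples of $\cxx$, which is the assertion.

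There is no deep obstacle here; the argument is bookkeeping. The two points that actually carry the result are that the expensive $\mathcal{O}(M\log M)$ collision step is run \emph{at most once} (Lemma~\ref{lem:facts}\,(\ref{item:facts_i})) rather than once per level, and that each no-collision step costs only $\mathcal{O}(m)$, not $\mathcal{O}(M)$, which rests on the uniform bound $m\jj\leq m$ from Lemma~\ref{lem:support}. Everything else is the arithmetic linking $L$ and $\tilde K$ to $M$ together with the $\mathcal{O}(n\log n)$ complexity of the fast DCT-III and DCT-IV borrowed from \cite{plonka_dct,wang}.
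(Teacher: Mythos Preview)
Your proof is correct and follows essentially the same accounting argument as the paper's own proof: split the cost into the $\mathcal{O}(M\log M)$ initialization, the $J-L=\mathcal{O}(\log_2\tfrac{N}{M})$ no-collision passes of $\mathcal{O}(m)$ each, and the at-most-one collision step of $\mathcal{O}(M\log M)$. You are slightly more explicit than the paper about the power-of-two bookkeeping (e.g.\ $2^L=\mathcal{O}(M)$ and $J-L\leq\log_2\tfrac{N}{M}$) and about invoking $m\jj\leq m$, but the structure and key observations are identical.
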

\begin{proof}
 Computing the initial vector $\x^{(L)}$ in line \ref{line:initialize} via a $2^L$-length DCT-III has a runtime of $\mathcal{O}\left(2^L\log2^L\right)$, see, e.g., \cite{plonka_dct,wang}, and finding $\mu^{(L)}$ and $m^{(L)}$ needs $\mathcal{O}\left(2^L\right)$ operations.
 
 For $j\in\{L,\dotsc,J-1\}\backslash\{j'\}$ the support of $\x\jj$ is not contained in $I_{2^j-M,2^j-1}$; hence, we have to apply the procedure from Theorem \ref{thm:nocoll}. Finding a nonzero entry in line \ref{line:nonzero} requires $\mathcal{O}\left(m\jj\right)=\mathcal{O}(m)$ operations by Lemma \ref{lem:nonzero} and executing lines \ref{line:nocoll_start} to \ref{line:nocoll_end} has a runtime of $\mathcal{O}\left(m\jj\right)$ as well.
 
 If $j=j'$, we use the method from Theorem \ref{thm:coll}. The computation of $\z\jjj_{(0)}$ in lines \ref{line:z_0} and \ref{line:z_0_2} requires a DCT-IV of length $2^{\tilde K-1}$ and further operations of complexity $\mathcal{O}\left(2^{\tilde K-1}\right)$, since $\D_{2^{\tilde K-1}}$ and $\diag(\tilde\cc)$ are diagonal and $J_{2^{\tilde K-1}}$ is a permutation. Computing $\z\jjj_{(1)}$ and $\x\jjj$ in lines \ref{line:z_1} and \ref{line:x_coll} and finding $\mu\jjj$ and $m\jjj$ in line \ref{line:find_supp} needs $\mathcal{O}\left(2^{\tilde K-1}\right)$ operations. Note that we can only estimate that $\widetilde m\jj=\mathcal{O}(M)$ and thus $2^{\tilde K-1}=\mathcal{O}(M)$, since $m$ is not known apriori and the support of $\x\jj$ can be located anywhere in $I_{2^j-M,2^j-1}$. Thus, lines \ref{line:coll_start} to \ref{line:find_supp} have a runtime of $\mathcal{O}\left(2^{\tilde K-1}\log2^{\tilde K-1}\right)=\mathcal{O}(M\log M)$. 
 
 Consequently, Algorithm \ref{alg:dct_bound} has an overall runtime of
 \[
  \mathcal{O}\left(\sum_{\substack{j=L \\ j\neq j'}}^{J-1}m\jj+2^{\tilde K}\log 2^{\tilde K}\right)
  =\mathcal{O}\left((J-L)m+M\log M\right)=\mathcal{O}\left(M\log M+m\log_2\frac{N}{M}\right).
 \]
 The initial vector $\x^{(L)}$ can be computed from $2^L$ samples of $\cxx$ in line \ref{line:initialize}. Finding an oddly indexed nonzero entry of $\left(\x\jjj\right)\dct$ in line \ref{line:nonzero} requires at most $m\jj$ samples of $\cxx$ by Lemma \ref{lem:nonzero}. Further, we need to take $2^{\tilde K}\leq 2^L$ samples in lines \ref{line:samples_nocoll_1} and \ref{line:samples_nocoll_2}, which yields a total sampling complexity of
 \[
  \mathcal{O}\left(\sum_{\substack{j=L \\ j\neq j'}}^{J-1}m\jj+2^L\right)
  =\mathcal{O}\left((J-L)m+M\right)=\mathcal{O}\left(M+m\log_2\frac{N}{M}\right).
 \]
\end{proof}

\subsection{The Sparse DCT for Exactly Known Short Support Length}\label{sec:alg_exact}
Having introduced our new sparse DCT for vectors with bounded short support length we can now modify Algorithm \ref{alg:dct_bound} to better fit the case where the support length $m$ of $\x$ is known exactly, i.e., if $M=m$. Since there is at most one index $j'$ for which the support of $\x^{(j')}$ is contained in the last $m$ entries, the procedure from Theorem \ref{thm:coll} only has to be applied if $m^{(j')}<m^{(j'+1)}$, i.e., if there was a collision of nonzero entries, or if $\nu^{(j')}=2^{j'}-1$, unlike in Algorithm \ref{alg:dct_bound}. 

We can simply replace $M$ by $m$ and $L=\left\lceil\log_2 M\right\rceil+1$ by $L\coloneqq\left\lceil\log_2 m\right\rceil+1$ in Algorithm \ref{alg:dct_bound} to obtain the sparse DCT for vectors with exactly known short support length. Then $\widetilde m^{(j')}=2^{j'}-\mu^{(j')}=m^{(j')}=\mathcal{O}(m)$ and $\tilde K=L$.  Note that $m\jjj=m$ for $j\geq j'$. We find the following runtime and sampling complexities.
\begin{thm}\label{thm:runtime_exact}
 Let $\x\in\RR^N$, $N=2^J$, $J\in\NN$, have a short support of length $m$ and assume that $\x$ satisfies $(\ref{eq:suppose})$. Further suppose that $m$ is known exactly. Then Algorithm $\ref{alg:dct_bound}$ has a runtime of $\mathcal{O}\left(m\log m+m\log_2\frac{N}{m}\right)$ and uses $\mathcal{O}\left(m+m\log_2\frac{N}{m}\right)$ samples of $\cxx$.
\end{thm}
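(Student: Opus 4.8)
The plan is to mirror the proof of Theorem~\ref{thm:runtime_bound} with $M$ replaced by $m$ throughout, and to track the single place where the exact-support hypothesis genuinely sharpens the bound. First I would fix the starting index $L\coloneqq\left\lceil\log_2 m\right\rceil+1$, so that $2^{L-1}\geq m$ and $2^L=\mathcal{O}(m)$. With this choice, line~\ref{line:initialize} computes $\x^{(L)}$ by a DCT-III of length $2^L$, which by \cite{plonka_dct,wang} costs $\mathcal{O}\!\left(2^L\log 2^L\right)=\mathcal{O}(m\log m)$ operations and reads $2^L=\mathcal{O}(m)$ samples of $\cxx$; determining $\mu^{(L)}$ and $m^{(L)}$ in line~\ref{line:initial_supp} then takes $\mathcal{O}\!\left(2^L\right)=\mathcal{O}(m)$ operations.

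Next I would run through the $J-L$ iterations of the for-loop. By Lemma~\ref{lem:facts} there is at most one index $j'\in\{L,\dotsc,J-1\}$ with $S^{(j')}\subset I_{2^{j'}-m,2^{j'}-1}$; for every other $j$ the vector $\x\jjj$ is recovered via Theorem~\ref{thm:nocoll}, which takes $\mathcal{O}\!\left(m\jj\right)$ operations and, by Lemma~\ref{lem:nonzero}, at most $m\jj$ samples of $\cxx$. Since $m\jj\leq m$ by Lemma~\ref{lem:support}, each such iteration costs $\mathcal{O}(m)$ time and $\mathcal{O}(m)$ samples, contributing $\mathcal{O}\!\left((J-L)\,m\right)$ in total over the loop.

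Then I would handle the at most one collision step $j=j'$, which is the only place where $M=m$ is used: from $S^{(j')}\subset I_{2^{j'}-m,2^{j'}-1}$ we get $\mu^{(j')}\geq 2^{j'}-m$, hence $\widetilde m^{(j')}=2^{j'}-\mu^{(j')}\leq m$ and therefore $\tilde K=\left\lceil\log_2\widetilde m^{(j')}\right\rceil+1\leq L$, i.e.\ $2^{\tilde K}=\mathcal{O}(m)$. By Theorem~\ref{thm:coll}, computing $\z\jjj_{(0)}$ in line~\ref{line:z_0} requires one DCT-IV of length $2^{\tilde K-1}$, costing $\mathcal{O}\!\left(2^{\tilde K-1}\log 2^{\tilde K-1}\right)=\mathcal{O}(m\log m)$, together with $\mathcal{O}\!\left(2^{\tilde K-1}\right)$ further operations for the diagonal scalings $\D_{2^{\tilde K-1}}$, $\diag(\tilde\cc)$ and the permutation $\J_{2^{\tilde K-1}}$; computing $\z\jjj_{(1)}$ and $\x\jjj$ in lines~\ref{line:z_1}--\ref{line:x_coll} and finding $\mu\jjj,\,m\jjj$ in line~\ref{line:find_supp} add only $\mathcal{O}\!\left(2^{\tilde K-1}\right)=\mathcal{O}(m)$ operations, while lines~\ref{line:samples_nocoll_1}--\ref{line:samples_nocoll_2} read $2^{\tilde K}=\mathcal{O}(m)$ samples of $\cxx$.

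Finally I would add everything up, using $J-L=J-\left\lceil\log_2 m\right\rceil-1=\mathcal{O}\!\left(\log_2\tfrac{N}{m}\right)$: the total runtime is
\[
 \mathcal{O}\!\left(m\log m+(J-L)\,m+m\log m\right)=\mathcal{O}\!\left(m\log m+m\log_2\tfrac{N}{m}\right),
\]
and the total number of samples of $\cxx$ is
\[
 \mathcal{O}\!\left(2^L+(J-L)\,m+2^{\tilde K}\right)=\mathcal{O}\!\left(m+m\log_2\tfrac{N}{m}\right).
\]
I do not expect a real obstacle here; the argument is a specialization of Theorem~\ref{thm:runtime_bound}, and the only point needing a moment's care is the inequality $\widetilde m^{(j')}\leq m$, which replaces the weaker bound $\widetilde m^{(j')}=\mathcal{O}(M)$ available in the bounded-length setting and is exactly what keeps the single collision step at cost $\mathcal{O}(m\log m)$ instead of $\mathcal{O}(M\log M)$.
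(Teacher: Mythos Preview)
Your proposal is correct and follows essentially the same approach as the paper: the paper's argument for Theorem~\ref{thm:runtime_exact} is precisely to specialize the proof of Theorem~\ref{thm:runtime_bound} by taking $M=m$ and $L=\lceil\log_2 m\rceil+1$, noting that then $\widetilde m^{(j')}=2^{j'}-\mu^{(j')}\leq m$ and $\tilde K\leq L$, so the single collision step costs $\mathcal{O}(m\log m)$ instead of $\mathcal{O}(M\log M)$. Your write-up is in fact more explicit than what the paper provides (the paper leaves the details to the reader after the preceding paragraph), and you correctly identify the inequality $\widetilde m^{(j')}\leq m$ as the one place where the exact-support hypothesis is genuinely used.
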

\section{Numerics}\label{sec:numerics}
In the following section we evaluate the performance of the variant of Algorithm \ref{alg:dct_bound} for exactly known support lengths and the variant for bounded short support lengths with respect to runtime and robustness to noise. To the best of our knowledge most existing sparse DCT algorithm use a the approach of computing $\x$ by recovering $\y=(\x^T,(\J_N\x)^T)^T$ from $\widehat\y$ by an unstructured and thus inefficient $2m$-sparse IFFT. Only Algorithm 2 in \cite{bit_plon} uses an IFFT especially tailored to the structure of $\y$, so we only compare the variants of our algorithm to this method and to \textsc{Matlab} 2018a's \texttt{idct} routine, which is part of the \emph{Signal Processing Toolbox}, see \cite{matlab_idct}. \texttt{idct} is a fast and highly optimized implementation of the fast inverse cosine transform of type II. Note that, compared to the implementation of \texttt{idct} in \textsc{Matlab} 2016b, which we used for the numerical experiments in \cite{bit_plon}, the runtime of \texttt{idct} in \textsc{Matlab} 2018a has reduced by almost half for arbitrary nonnegative vectors of length $N=2^{20}$ on the machine used for the experiments, which is why the results of the numerical experiments with respect to runtime in this section are different from the ones in \cite{bit_plon}, Section 6.2. All algorithms have been implemented in \textsc{Matlab} 2018a, and the code is freely available in \cite{nam_code_dct,nam_code_dct_short}. Note that Algorithm 2 in \cite{bit_plon} does not require any a priori knowledge of the support length, but needs that for $\x\in\RR^{2^J}$ the vector $\y=(x_0,x_1,\dotsc,x_{N-1},x_{N-1},x_{N-2},\dotsc,x_0)^T\in\RR^{2^{J+1}}$ satisfies
\begin{equation}\label{eq:suppose_oneblock}
 \left|\sum_{l=0}^{2^{J+1-j}-1}y_{k+2^jl}\right|>\eps \qquad \forall\,j\in\{0,\dotsc,J+1\}
\end{equation}
for all $\left|y_k\right|>\eps$ for a noise threshold $\epsilon>0$. Algorithm \ref{alg:dct_bound}, on the other hand, requires an upper bound $M\geq m$ on the support length and that $\x\in\RR^{2^J}$ satisfies (\ref{eq:suppose}).

Figure \ref{fig:runtime} shows the average runtimes of Algorithm \ref{alg:dct_bound} for exactly known support lengths, i.e., for $M=m$, and for bounded short support lengths with $M=3m$, Algorithm 2 in \cite{bit_plon} and \texttt{idct} applied to $\cxx$ for 1{,}000 randomly generated $2^{20}$-length vectors $\x$ with short support of lengths varying between 10 and 500{,}000. 
\begin{figure}[!ht]
\centering
\resizebox{0.65\textwidth}{!}{\input{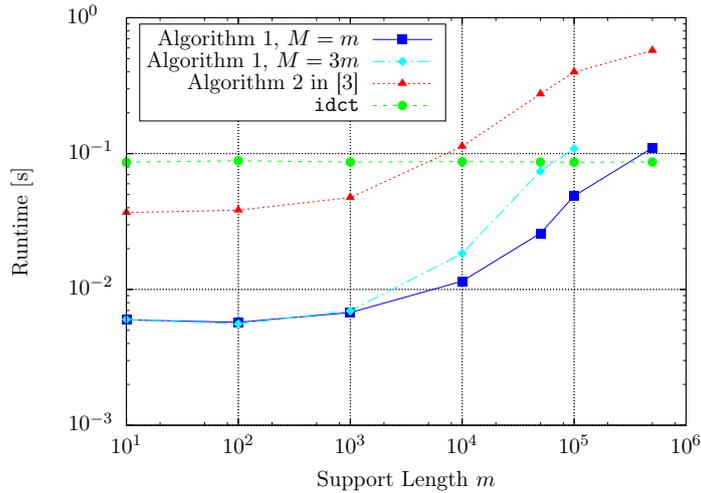}}
\caption{Average runtimes of Algorithm \ref{alg:dct_bound} for exactly known short support and for bounded short support and Algorithm 2 in \cite{bit_plon} with $\eps=10^{-4}$, and \textsc{Matlab}'s \texttt{idct} for 1{,}000 random input vectors with short support of length $m$, bound $M=3m$ and vector length $N=2^{20}$.} 
\label{fig:runtime}
\end{figure}
For Algorithm \ref{alg:dct_bound} and Algorithm 2 in \cite{bit_plon} we use the threshold $\eps=10^{-4}$. The nonzero entries of the vectors are chosen randomly with uniform distribution between 0 and 10, with $x_{\mu^{(J)}}$ and $x_{\nu^{(J)}}$ chosen from $(\eps,10]$. For each vector at most $\left\lfloor(m-2)/2\right\rfloor$ entries in the support block, excluding the first and last one, are randomly set to 0. Hence, both (\ref{eq:suppose}) and (\ref{eq:suppose_oneblock}) hold. Since for $m=500{,}000$ we have that $M=3m>N$, we only execute Algorithm \ref{alg:dct_bound} in the variant for bounded support lengths up to $m=100{,}000$. 

Of course the comparison of the sparse DCT algorithms to the highly optimized, support length independent \texttt{idct} routine must be flawed; however, one can see that all three sparse DCT procedures are much faster than \texttt{idct} for sufficiently small support lengths. For exactly known support lengths Algorithm \ref{alg:dct_bound} achieves smaller runtimes for block lengths up to $m=100{,}000$, for bounded support lengths this is the case for block lengths up to $m=50{,}000$, where the known bound on the block length is $M=150{,}000$, and for Algorithm 2 in \cite{bit_plon} for block lengths up to $m=1{,}000$. Note that by setting $\left\lfloor(m-2)/2\right\rfloor$ entries inside the support to zero, the actual sparsity of $\x$ can be almost as low as $m/2$; however, this does not affect the runtime of any of the considered algorithms. It follows from Table \ref{tab:error_ex}, presenting the average reconstruction errors for exact data for all four considered methods, that, while the sparse DCT algorithms do not achieve reconstruction errors comparable to those of \texttt{idct}, their outputs are still very accurate.
\begin{table}[!ht]
\begin{center}
\begin{tabular}{rcccc}\toprule
\multirow{2}{*}{$m$} & Algorithm \ref{alg:dct_bound}, & Algorithm \ref{alg:dct_bound},  & \multirow{2}{*}{Algorithm 2 in \cite{bit_plon}}	& \multirow{2}{*}{\texttt{idct}} \\
&  $M=m$ & $M=3m$ & &  \\ \midrule
10		& $1.8\cdot 10^{-20}$	& $1.7\cdot 10^{-20}$	& $1.3\cdot 10^{-19}$	& $7.8\cdot 10^{-21}$ \\
100		& $5.3\cdot 10^{-20}$	& $3.9\cdot 10^{-20}$	& $4.9\cdot 10^{-18}$	& $2.4\cdot 10^{-20}$ \\
1{,}000		& $7.5\cdot 10^{-14}$	& $4.1\cdot 10^{-14}$	& $4.9\cdot 10^{-13}$	& $7.6\cdot 10^{-20}$ \\
10{,}000	& $1.0\cdot 10^{-12}$	& $1.4\cdot 10^{-12}$	& $3.9\cdot 10^{-12}$	& $2.4\cdot 10^{-19}$ \\
50{,}000	& $3.6\cdot 10^{-12}$	& $2.9\cdot 10^{-12}$	& $1.5\cdot 10^{-11}$	& $5.4\cdot 10^{-19}$ \\
100{,}000 	& $7.5\cdot 10^{-12}$	& $7.6\cdot 10^{-19}$	& $2.9\cdot 10^{-11}$	& $7.6\cdot 10^{-19}$ \\
500{,}000	& $1.7\cdot 10^{-18}$	& $1.7\cdot 10^{-18}$	& $9.6\cdot 10^{-11}$	& $1.7\cdot 10^{-18}$ \\  \bottomrule 
\end{tabular}
\caption{Reconstruction errors for the four DCT algorithms for exact data.}
\label{tab:error_ex}
\end{center}
\end{table}

Further, we also investigate the robustness of Algorithm \ref{alg:dct_bound} for noisy data. We create disturbed cosine data $\z\dct\in\RR^N$ by adding uniform noise $\bfeta\in\RR^N$ to the given data $\cxx$,
\[
 \z\dct\coloneqq\cxx+\bfeta.
\]
We measure the noise with the \emph{signal-to-noise ratio (SNR)}, given by
\[
 \text{SNR}\coloneqq20\cdot\log_{10}\frac{\left\|\cxx\right\|_2}{\left\|\bfeta\right\|_2}.
\]
Figures \ref{fig:error_dct_100} and \ref{fig:error_dct_1000} depict the average reconstruction errors $\left\|\x-\x'\right\|_2/N$, where $\x$ denotes the original vector and $\x'$ the reconstruction by the corresponding algorithm applied to $\z\dct$ for support lengths $m=100$ and $m=1{,}000$. 
\begin{figure}[!tbp]
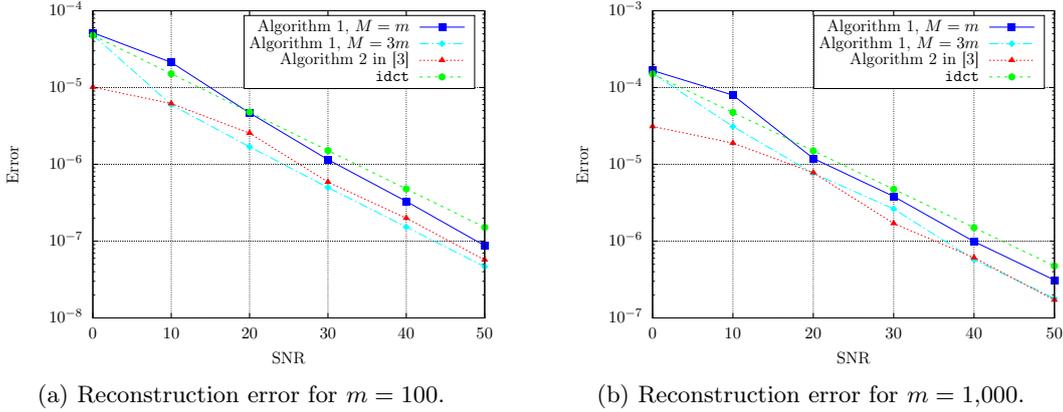

  \centering
  \subfloat[Reconstruction error for $m=100$.]{\resizebox{0.49\textwidth}{!}{\input{error_DCT_100_comp}}\label{fig:error_dct_100}}
  \hfill
  \subfloat[Reconstruction error for $m=1{,}000$.]{\resizebox{0.49\textwidth}{!}{\input{error_DCT_1000_comp}}\label{fig:error_dct_1000}}
  \caption{Average reconstruction errors $\|\x-\x'\|_2/N$ of Algorithm \ref{alg:dct_bound} for $M=m$ and $M=3m$, Algorithm 2 in \cite{bit_plon} and \texttt{idct} for 1{,}000 random input vectors with support length $m$ and vector length $N=2^{20}$.}
\end{figure}
The threshold parameters $\eps$ for both variants of Algorithm \ref{alg:dct_bound} and Algorithm 2 in \cite{bit_plon} are chosen according to Table \ref{tab:epsilon_dct}, where we use the $\eps$-values from \cite{bit_plon}, Section 6.2 for Algorithm 2 in said paper. All parameters were obtained in an attempt to minimize the reconstruction error and maximize the rate of correct recovery.
\begin{table}[!ht]
 \begin{center}
  \begin{tabular}{S[table-format=3.2]S[table-format=3.2]S[table-format=3.2]S[table-format=3.2]}\toprule
   {SNR} & {Alg. \ref{alg:dct_bound},} & {Alg. \ref{alg:dct_bound},} & {Alg. 2 in \cite{bit_plon}} \\
	& {$m=100$}	 & {$m=1{,}000$} & \\ \midrule
   0	& 2.50	& 2.50	& 2.50 \\
   10	& 2.00	& 2.10	& 1.80 \\
   20	& 1.00	& 1.50	& 1.00   \\
   30	& 0.40	& 0.85	& 0.50 \\
   40	& 0.15	& 0.20	& 0.15 \\
   50	& 0.05	& 0.10	& 0.05 \\ \bottomrule
  \end{tabular}
  \caption{Threshold $\varepsilon$ for Algorithm \ref{alg:dct_bound} and Algorithm 2 in \cite{bit_plon}.}
  \label{tab:epsilon_dct}
 \end{center}
\end{table}
For Algorithm \ref{alg:dct_bound} with $M=3m$ the reconstruction yields a smaller error than the one for \texttt{idct} and, for SNR values greater than 10, even a slightly smaller error than the one for Algorithm 2 in \cite{bit_plon} for $m=100$ and an error comparable to the one for Algorithm 2 in \cite{bit_plon} for $m=1{,}000$. For exactly known support lengths, the reconstruction yields a slightly smaller error than the one by \texttt{idct} for both support lengths. 

In certain applications it might be important to know the support of $\x$; hence, we also examine whether the sparse DCT algorithms can correctly identify the support for noisy input data. Tables \ref{tab:correct_recov_100} and \ref{tab:correct_recov_1000} show the rates of correct recovery of the support for $m=100$ and $m=1{,}000$. 
\begin{table}[!ht]
\begin{center}
\begin{tabular}{rS[table-format=3.2]S[table-format=3.2]S[table-format=3.2]S[table-format=3.2]S[table-format=3.2]}\toprule
& \multicolumn{5}{c}{Rate of Correct Recovery in \% for $m=100$} \\
\multirow{3}{*}{SNR} & {Alg. \ref{alg:dct_bound},} & {Alg. \ref{alg:dct_bound},} & {Alg. \ref{alg:dct_bound},} & {Alg. 2 in \cite{bit_plon}}	& {Alg. 2 in \cite{bit_plon},} \\
&  {$M=m$} & {$M=3m$} & {$M=3m$,} & &  \\ 
& & & {$m'\leq 3m$} & & {$m'\leq 3m$} \\ \midrule
0	& 61.6	& 89.9	& 0.0	& 83.1	& 77.1 \\
10	& 64.0	& 98.7	& 85.4	& 97.6	& 97.4 \\
20	& 95.1	& 100.0	& 96.2	& 100.0	& 100.0 \\
30	& 99.3	& 100.0	& 98.6	& 100.0	& 100.0 \\
40	& 99.9	& 100.0	& 99.4	& 100.0	& 100.0 \\
50	& 100.0	& 100.0	& 99.9	& 100.0	& 100.0 \\ \bottomrule
\end{tabular}
\caption{Rate of correct recovery of the support of $\x$ in \% for Algorithm \ref{alg:dct_bound} for $M=m$ and $M=3m$ and Algorithm 2 in \cite{bit_plon}, without bounding $m'$ and with $m'\leq 3m$, for 1{,}000 random input vectors with support length $m=100$ from Figure \ref{fig:error_dct_100}.}
\label{tab:correct_recov_100}
\end{center}
\end{table}
\begin{table}[!ht]
\begin{center}
\begin{tabular}{rS[table-format=3.2]S[table-format=3.2]S[table-format=3.2]S[table-format=3.2]S[table-format=3.2]}\toprule
& \multicolumn{5}{c}{Rate of Correct Recovery in \% for $m=1{,}000$} \\
\multirow{3}{*}{SNR} & {Alg. \ref{alg:dct_bound},} & {Alg. \ref{alg:dct_bound},} & {Alg. \ref{alg:dct_bound},} & {Alg. 2 in \cite{bit_plon}}	& {Alg. 2 in \cite{bit_plon},} \\
&  {$M=m$} & {$M=3m$} & {$M=3m$,} & &  \\ 
& & & {$m'\leq 3m$} & & {$m'\leq 3m$} \\ \midrule
0	& 51.6	& 88.0	& 0.0	& 83.1	& 68.0 \\
10	& 51.6	& 93.4	& 53.7	& 96.4	& 95.0 \\
20	& 99.4	& 100.0	& 84.5	& 100.0	& 99.7 \\
30	& 100.0	& 100.0	& 89.3	& 100.0	& 99.6 \\
40	& 100.0	& 100.0	& 94.8	& 100.0	& 99.8 \\
50	& 100.0	& 100.0	& 98.1	& 100.0	& 99.8 \\ \bottomrule
\end{tabular}
\caption{Rate of correct recovery of the support of $\x$ in \% for Algorithm \ref{alg:dct_bound} for $M=m$ and $M=3m$ and Algorithm 2 in \cite{bit_plon}, without bounding $m'$ and with $m'\leq 3m$, for 1{,}000 random input vectors with support length $m=1{,}000$ from Figure \ref{fig:error_dct_1000}.}
\label{tab:correct_recov_1000}
\end{center}
\end{table}
As Algorithm \ref{alg:dct_bound} and Algorithm 2 in \cite{bit_plon} tend to overestimate the support for noisy data, we consider $\x$ to be correctly recovered by $\x'$ in the second, third and fifth column if the support of $\x$ is contained in the support found by the sparse DCT algorithms. In the fourth and sixth column we additionally require that the support length $m'$ obtained by the procedures satisfies $m'\leq 3m$. Note that if $m$ is known exactly, Algorithm \ref{alg:dct_bound} will not overestimate the support length $m$.

For SNR values of 20 and greater all sparse DCT algorithms have very high rates of correct recovery. Algorithm \ref{alg:dct_bound} for bounded short support overestimates the support length by more than a factor three in less than 4\% of the cases for SNR values of 20 or more for $m=100$ and in less than 6 \% of the cases for SNR values of 40 or more for $m=1{,}000$. Algorithm 2 in \cite{bit_plon} never overestimates the support length for $m=100$ and in less than 1\% of the cases for $m=1{,}000$, both for SNR values of 20 or more.

\section*{Acknowledgement}
The authors gratefully acknowledge partial support for this work by the DFG in the framework of the GRK 2088.
\appendix

{\small
\bibliographystyle{abbrv}
\bibliography{library.bib}
}


\end{document}